\theoremstyle{definition}
\theoremstyle{plain}
\newtheorem{theorem}{Theorem}[section]
\newtheorem{remark}{Remark}[section]
\newtheorem{assumption}{Assumption}
\newcommand{\vb}{\vspace{3mm}}
\begin{document}

\allowdisplaybreaks

\title[A transient Cram\'er-Lundberg model]{{\small 
    A transient Cram\'er-Lundberg model\\ with applications to credit risk}}

\author{By Guusje Delsing and Michel Mandjes}

\begin{abstract} This paper considers a variant of the classical Cram\'er-Lundberg model that is particularly appropriate in the credit context, with the distinguishing feature that it corresponds to a finite number of obligors. The focus is on computing the ruin probability, i.e., the probability that the initial reserve, increased by the interest received from the obligors and decreased by the losses due to defaults, drops below zero. Besides an exact analysis (in terms of transforms) of this ruin probability, also an asymptotic analysis is performed, including an efficient importance-sampling based simulation approach. 

\noindent The base model is extended in multiple dimensions: (i)~we consider a model in which there may, in addition, be losses that do not correspond to defaults, (ii)~then we analyze a model in which the individual obligors are coupled through a regime-switching mechanism, (iii)~then we extend the model such that between the losses the reserve process behaves as a Brownian motion rather than a deterministic drift, and (iv)~we finally consider a set-up with multiple groups of statistically identical obligors. 
\vb

\noindent
{\sc Keywords.} 
Cram\'{e}r-Lundberg processes $\circ$ ruin probabilities $\circ$  large-deviation asymptotics $\circ$ importance sampling

\vb

\noindent
{\sc AMS Subject Classification (MSC2010).} 
Primary: 
60G51 

\vb

\noindent
{\sc Affiliations.} 
{\it Guusje Delsing} is with Korteweg-de Vries Institute for Mathematics, University of Amsterdam, Science Park 904, 1098 XH Amsterdam, the Netherlands, and with Rabobank, Croeselaan 18, 3521 CB Utrecht, the Netherlands. Email: \url{g.a.delsing@uva.nl}.

\noindent
{\it Michel Mandjes} is with Korteweg-de Vries Institute for Mathematics, University of Amsterdam, Science Park 904, 1098 XH Amsterdam, the Netherlands. He is also with E{\sc urandom}, Eindhoven University of Technology, Eindhoven, the Netherlands, and Amsterdam Business School, Faculty of Economics and Business, University of Amsterdam, Amsterdam, the Netherlands. His research is partly funded by NWO Gravitation project N{\sc etworks}, grant number 024.002.003. Email: \url{m.r.h.mandjes@uva.nl}. Version: \today.

\noindent
\end{abstract}
\maketitle

\newpage

\section{Introduction}
In insurance and risk, a pivotal role is played by the classical {\it Cram\'er-Lundberg model} (also known as the {\it compound Poisson model}). In this model independent and identically distributed claims arrive according to a Poisson process, whereas premiums are earned at a constant rate. This means that if the initial reserve is given by $u\geqslant 0$, then the reserve level at time $t\geqslant 0$ is given by 
\begin{equation}X_t:= u + rt - \sum_{i=1}^{N_t} L_i,\label{CP}\end{equation}
with $r>0$ the premium rate, $(N_t)_{t\geqslant 0}$ a Poisson process with intensity $\lambda>0$, and $(L_i)_{i\in{\mathbb N}}$ a sequence of i.i.d.\ random variables. The key quantity of interest is the (finite-horizon) ruin probability
${\mathbb P}(\exists s\in[0,t]: X_s<0)$ and its infinite-horizon counterpart ${\mathbb P}(\exists s\geqslant 0: X_s<0)$. A broad set of techniques has been developed to analyze this quantity, for the Cram\'er-Lundberg model itself as well as for more advanced variants; we refer to \cite{AsmussenAlbrecher} for an exhaustive overview. With the random variable $L$ denoting a generic claim, often the {\it net profit condition} ${\mathbb E}\,(X_t-X_0)= rt - {\mathbb E}N_t\,{\mathbb E}L>0$ is imposed. Under this condition, which effectively means that $r> \lambda\,{\mathbb E}L$, it is guaranteed that ruin is rare. A practically relevant objective is to select the initial reserve $u$ such that the (finite or infinite-horizon) ruin probability is below some threshold $\varepsilon.$

Essentially the same modeling framework can be applied in the context of credit as well. Then the claim arrival process describes the default epochs, the premiums correspond to the interest received from the obligors, and the claims are the corresponding losses. One may wonder, however, whether in this setting the assumption of Poisson arrivals is any realistic: {whereas in the insurance context the number of claims issued can in principle exceed any bound, it is obvious that in the credit context the number of defaults cannot exceed the number of obligors. More concretely, as soon as an obligor goes into default, it effectively leaves the system.} Motivated by this observation, we study in this paper the ruin probability in a transient variant of the Cram\'er-Lundberg model. We do so by defining for each obligor a random variable (e.g.\ exponentially distributed) corresponding to the time-to-default, where after the default the obligor can neither cause any new default nor generates any interest anymore. 

\vb

{\it Model.} We proceed by providing a more formal description of our transient variant of the classical Cram\'er-Lundberg model. Here we state the main model, which we will generalize in various directions later in the paper.

We consider a setting in which there are initially $n\in{\mathbb N}$ obligors, each of which goes into default after some random amount of time. The corresponding $n$ times-to-default are assumed to be i.i.d.\ non-negative random variables, characterized by the density $f(\cdot)$. {In the credit context, risk is quantified over a finite time horizon justifying the use of a model in which clients eventually all go into default.} Let the loss-at-default, per obligor, be distributed as a non-negative random variable $L$, and let these losses be i.i.d., each with Laplace transform $\ell(\cdot)$. It is natural to assume that the income per unit of time is proportional to the number of obligors that have not gone into default yet. In other words, the surplus process increases at a rate $ri$ per unit of time, for some $r>0$, when there are $i$ obligors that have not defaulted yet, for $i\in\{0,\ldots,n\}.$ 
The company has an initial reserve level $u>0$. Because of the similarity with insurance and risk models, throughout this paper we sometimes refer to losses as claims. 

The primary objective of this paper is to evaluate $p_n(u,t)$, defined as the ruin probability of the company before time $t$, given there are $n$ obligors at time $0$ and that the initial reserve is $u$. Being able to compute $p_n(u,t)$, one can pick $u$ such that this ruin probability remains below an acceptable level $\varepsilon>0$. In addition, when a new obligor wishes to get a loan, knowledge of $p_n(u,t)$ allows one to decide if (and, if yes, by how much) the initial level should be adjusted.

\vb

{\it Contributions.} For the main model, we provide a procedure by which, for any $n$, the double transform (in space and time, that is) of $p_n(u,t)$ can be determined. More specifically, we develop a recursive relation by which these transforms can be determined. While this means that one can evaluate the finite-horizon ruin probability $p_n(u,t)$ by numerical inversion, we in addition point out how to efficiently estimate this rare-event probability relying on importance sampling simulations; the procedure proposed has provable optimality properties. In addition we provide the logarithmic asymptotics of $p_n(nu,t)$ as $n$ grows large (i.e., in this setting the initial reserve $u$ is scaled by the number of obligors $n$). 

Besides the base model, four generalizations are dealt with in this paper.
One could argue that the assumption of the times-to-default being independent is not realistic, as in reality defaults tend to cluster. To resolve this issue, in one of the generalizations we allow a {\it regime switching} mechanism (also frequently referred to as {\it Markov modulation}) that induces dependence between the obligors. The regime could be thought of as the `state of the economy', wherein every state of the economy the dynamics of the reserve level are described by a specific Cram\'er-Lundberg model. 
In a second generalization, we consider a model in which some loss events correspond to defaults (reducing the number of obligors by one) while others do not (leaving the number of obligors unchanged). Another unrealistic feature of the main model is that the obligors are homogeneous: their times-to-default (losses, respectively) stem from the same distribution. To remedy this, we
also analyze a model variant corresponding to heterogenous obligors: there are multiple groups, each of them consisting of statistically identical obligors. {This extension offers an important additional flexibility as one can cluster obligors based on the loss distribution, which is often deterministic in the credit context, and consider classes of obligors that do not go into default or have a class-specific income rate.}
A last extension that we discuss in this paper concerns a model in which between loss events the reserve level behaves as a Brownian motion (rather than as a deterministic drift).

\vb

{\it Related literature.} 
Starting from the pioneering papers by Cram\'er \cite{C1} and Lundberg \cite{L1,L2}, focusing on the classical compound Poisson model \eqref{CP},
a broad range of risk models has been analyzed. Without attempting to provide a complete overview, we proceed by discussing a few important branches; we refer to \cite{MIK,KGS, TEU} for general accounts of risk theory.
In the first place, the assumption of the cumulative claim process being of compound Poisson type has been lifted, thus allowing a compound Poisson claim process perturbed by a diffusion \cite{G2, G1}, and even a (spectrally one-sided) L\'evy claim process; see e.g.\ \cite[Ch.\ X and XI]{AsmussenAlbrecher} and \cite{DM,KYP}. In addition, some models incorporate returns on investment, while in other models the dynamics of the reserve process are level-dependent; see e.g.\ \cite[Ch.\ VIII]{AsmussenAlbrecher} and \cite{AC, BM}. Finally, there is a substantial body of papers exploring the effect of specific dependence structures; see e.g.\ \cite{CKM} and, for an overview, \cite[Ch.\ XIII]{AsmussenAlbrecher}. More specifically, the effect of parameter uncertainty can be analyzed through the resampling model recently proposed in \cite{CDMR}.

\vb

{\it Organization.} Section \ref{S3} provides an explicit analysis, in terms of transforms, for the base model introduced above. A large deviations analysis of the tail probability is presented in Section \ref{asy}, together with an importance-sampling based simulation approach and a uniform upper bound. The four extensions of the base model are presented by Section \ref{ext}. {The final section contains a series of numerical experiments.}

\section{Exact analysis}\label{S3}
In this section we analyze the base model that was described in the introduction. We start by defining the key quantities of this base model, pertaining to the case that each of the obligors has a time-to-default that is exponentially distributed.
We then present our analysis yielding a recursion for the double transform of the ruin probability. 

\subsection{Notation and preliminaries}
Per obligor the rate of going into default is $\lambda>0$. This means that if there are still $i$ obligors left (i.e., being not in default), the time till the next default is exponentially distributed with mean $(\lambda i)^{-1}$.

Recall that $p_n(u,t)$ is the probability of ruin before time $t$, starting with $n$ obligors at time $0$, given the initial reserve level is $u$.
In our approach we (uniquely) characterize $p_n(u,t)$ through its double transform
\[\psi_n(\gamma) := \int_0^\infty e^{-\gamma u} \int_0^\infty \vartheta e^{-\vartheta t} p_n(u,t)\,{\rm d}t\,{\rm d}u
=\int_0^\infty e^{-\gamma u}  p_n(u)\,{\rm d}u,\]
where $p_n(u)$ can be interpreted as the probability of ruin before an exponentially distributed clock with mean $\vartheta^{-1}$ (which is sampled independently from anything else). The case of $t=\infty$ corresponds with $\vartheta\downarrow 0$.
The main result of this section is an expression (recursive in $n$) for $\psi_n(\gamma)$: we express $\psi_n(\cdot)$ in terms of $\psi_{n-1}(\cdot)$. Observe that we can equivalently write $p_n(u)$ as ${\mathbb P}(Z_n\geqslant u)$, where $Z_n$ is the maximum of the {\it net} cumulative {loss} process (the net cumulative {claim} process, in the insurance context) over the above-mentioned exponentially distributed amount of time (with mean $\vartheta^{-1}$, that is). 

In practical settings, one typically has that $r>-\lambda\ell'(0)=\lambda\,{\mathbb E}L$, so that at any point in time ruin is rare, in the sense that the expected reserve increases as a function of time; to this end, realize that when there are $i\in\{0,\ldots,n\}$ obligors left, the `local drift' of the reserve process is $ri + \lambda i\,\ell'(0)>0.$

\subsection{Analysis}\label{subsec_analysis}
In this subsection we present a recursive scheme to evaluate $\psi_n(\gamma)$.
The main idea is to condition on the first event, being either the first default (which happens after an exponentially distributed time with mean $(\lambda n)^{-1})$ or the expiration of the exponential clock (which happens after an exponentially distributed time with mean $\vartheta^{-1})$. If the former event happens to apply first, then we can still reach ruin, but now with $n-1$ obligors and an adapted initial reserve. If the latter events occurs first, then we won't be facing ruin before the exponential clock expires.
These ideas can be translated into mathematical terms as
\begin{equation}
\label{RECU}p_n(u) = \int_0^\infty \lambda n\, e^{- (\lambda n+\vartheta) t} \,{\mathbb P}(Z_{n-1}+L\geqslant u+rnt)\,{\rm d}t;\end{equation}
use that the time till the first event is exponentially distributed with mean $(\lambda n+\vartheta)^{-1}$, and that the first event is a default with probability $\lambda n/(\lambda n+\vartheta)$. 

We proceed by analyzing $\psi_n(\gamma)$ using the relation \eqref{RECU}, with the objective to express it in terms of $\psi_{n-1}(\cdot)$. By a change-of-variable $v:=u+rnt$, we obtain
\begin{align*}\psi_n(\gamma) &=\int_0^\infty e^{-\gamma u}  \int_0^\infty \lambda n\, e^{- (\lambda n+\vartheta) t} \,{\mathbb P}(Z_{n-1}+L\geqslant u+rnt)\,{\rm d}t\,{\rm d}u\\
&=\frac{1}{rn} \int_0^\infty e^{-\gamma u}  \int_u^\infty \lambda n\, e^{- (\lambda n+\vartheta) (v-u)/(rn)}\, {\mathbb P}(Z_{n-1}+L\geqslant v)\,{\rm d}v\,{\rm d}u.
\end{align*}
The next step is to swap the order of the integrals, exploiting the fact that the integral over $u$ allows an elementary solution:
\begin{align*}\frac{1}{rn}& \int_0^\infty  \left(\int_0^v e^{-\gamma u} e^{ (\lambda n+\vartheta) u/(rn)} \,{\rm d}u\right)
 \lambda n\, e^{- (\lambda n+\vartheta) v/(rn)}\, {\mathbb P}(Z_{n-1}+L\geqslant v)\,{\rm d}v\\
 &=\frac{\lambda n}{\gamma rn -\lambda n-\vartheta}\int_0^\infty \big(e^{- (\lambda n+\vartheta) v/(rn)}-e^{-\gamma v}\big) {\mathbb P}(Z_{n-1}+L\geqslant v)\,{\rm d}v.\end{align*}
In the last expression, we see an object that resembles a Laplace transform, but observe that it features a complementary cumulative distribution function rather than a density. Recall however the standard identity
\begin{equation}\label{e1}\int_0^\infty e^{-\gamma u} {\mathbb P}(X\geqslant u){\rm d}u =\frac{1}{\gamma}-\frac{1}{\gamma}\int_0^\infty e^{-\gamma u} {\mathbb P}(X\in{\rm d}u)= \frac{1-{\mathbb E}\,e^{-\gamma X}}{\gamma}.\end{equation}
In addition, using integration by parts, for the non-negative random variable $Z_{n-1}$,
\begin{equation}
\label{e2}{\mathbb E}\,e^{-\gamma Z_{n-1}} = \int_0^\infty e^{-\gamma x} {\mathbb P}(Z_{n-1}\in {\rm d}x) = 1-\gamma  \int_0^\infty {\mathbb P}(Z_{n-1}>x)e^{-\gamma x}{\rm d}x= 1-\gamma\psi_{n-1}(\gamma).\end{equation}
By the identity (\ref{e1}), and using the independence between the random variables $Z_{n-1}$ and $L$, we obtain, for any $\gamma\geqslant 0$, with $d_n:=(\lambda n+\vartheta)/(rn)$,
\begin{align*}\psi_n(\gamma) &=\frac{\lambda n}{\gamma rn -\lambda n-\vartheta}\Big(\frac{rn}{\lambda n+\vartheta}\left(1- {\mathbb E}\,e^{-(\lambda n+\vartheta)/(rn)\,(Z_{n-1}+L)}\right)\,-\frac{1}{\gamma}\left(1- {\mathbb E}\,e^{-\gamma\,(Z_{n-1}+L)}\right)\Big)\\
&=\frac{\lambda n}{\lambda n+\vartheta}\frac{1}{\gamma}+\frac{\lambda n}{\gamma rn -\lambda n-\vartheta}\left(\frac{{\mathbb E}\,e^{-\gamma Z_{n-1}}\ell(\gamma)}{\gamma}-\frac{{\mathbb E}\,e^{-d_n Z_{n-1}}\ell(d_n)}{d_n}\right),
\end{align*}
which, by applying (\ref{e2}) and a few elementary algebraic steps, equals
\[\frac{\lambda n}{\lambda n+\vartheta}\frac{1}{\gamma}+\frac{\lambda n}{\lambda n +\vartheta-\gamma r n}\left(B\left(\frac{\lambda n +\vartheta}{rn},\psi_{n-1}\left(\frac{\lambda n +\vartheta}{rn}\right)\right)-B\left(\gamma,\psi_{n-1}(\gamma)\right)\right),\]
where we define
\[B(x,y):= \ell(x)\left(\frac{1}{x}-y\right).\]

Conclude that we have expressed $\psi_n(\cdot)$ in terms of $\psi_{n-1}(\cdot)$, so that we would obtain a recursion if we would have an explicit expression for $\psi_0(\cdot)$. Recall that $\psi_0(\cdot)$ corresponds to ruin in the scenario without any obligor left. Obviously $p_0(u,t)\equiv 0$ for any $u$ and $t$, entailing that $\psi_0(\gamma)\equiv 0$ for any value of $\gamma$.
It means that we can thus recursively compute $\psi_n(\gamma)$.
The theorem below summarizes the findings so far.

\begin{theorem} \label{TH1} For any $\gamma\geqslant 0$ and $n\in{\mathbb N}$, we have the recursion
\[\psi_{n}(\gamma)= \frac{\lambda n}{\lambda n+\vartheta}\frac{1}{\gamma}+\frac{\lambda n}{\lambda n +\vartheta-\gamma r n}\left(B\left(\frac{\lambda n +\vartheta}{r n},\psi_{n-1}\left(\frac{\lambda n +\vartheta}{rn}\right)\right)-B\left(\gamma,\psi_{n-1}(\gamma)\right)\right),\]
where $\psi_0(\gamma)\equiv 0$.
\end{theorem}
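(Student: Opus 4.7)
The plan is to obtain the recursion by a first-event decomposition of $p_n(u)$. After time $0$ the first of two independent exponential clocks fires: the minimum of the next default epoch (rate $\lambda n$) and the killing clock (rate $\vartheta$). If the killing clock wins, no ruin is registered by time $t\wedge T_\vartheta$; if a default wins at time $t$, the reserve just after the default equals $u + rnt - L$, and from that moment the process starts afresh with $n-1$ obligors and the same exponential killing clock, by the lack-of-memory property. Expressing the event $\{Z_n\geqslant u\}$ accordingly gives exactly the identity
\[p_n(u) = \int_0^\infty \lambda n\, e^{-(\lambda n+\vartheta)t}\,\mathbb{P}(Z_{n-1}+L \geqslant u+rnt)\,{\rm d}t,\]
so this step is essentially probabilistic set-up plus the observation that $Z_{n-1}$ and $L$ are independent of the first inter-arrival time.

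Next I would Laplace-transform in $u$. Substituting $v := u + rnt$ converts the joint $(u,t)$-integral into a region $\{0 \leqslant u \leqslant v\}$, which lets me swap the order of integration and carry out the $u$-integral explicitly, since its integrand is a pure exponential in $u$. What remains is a Laplace-type integral in $v$ of the tail $\mathbb{P}(Z_{n-1}+L\geqslant v)$, evaluated at two different arguments: $\gamma$ and $d_n := (\lambda n+\vartheta)/(rn)$. At this point I would invoke the two standard identities highlighted in the excerpt, namely
\[\int_0^\infty e^{-\gamma u}\mathbb{P}(X\geqslant u)\,{\rm d}u = \frac{1-\mathbb{E}e^{-\gamma X}}{\gamma}, \qquad \mathbb{E}e^{-\gamma Z_{n-1}} = 1 - \gamma\,\psi_{n-1}(\gamma),\]
together with $\mathbb{E}e^{-\gamma(Z_{n-1}+L)} = (1-\gamma\psi_{n-1}(\gamma))\ell(\gamma)$ by independence. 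Substituting, collecting the constant term $\lambda n/((\lambda n+\vartheta)\gamma)$, and bundling the rest through the definition $B(x,y):=\ell(x)(1/x-y)$ yields the stated recursion.

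The base case $\psi_0(\gamma)\equiv 0$ follows because with no obligors there are no defaults and no premium, so the reserve stays at $u>0$ and $p_0(u,t)\equiv 0$; thus the recursion determines all $\psi_n$ uniquely.

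The main obstacle is the algebraic bookkeeping: after swapping the integrals one obtains a prefactor $\lambda n/(\gamma rn - \lambda n - \vartheta)$ and needs to split the combined expression in just the right way so that the two $B$-terms emerge and the residual term reduces to $\lambda n/((\lambda n+\vartheta)\gamma)$. A related minor subtlety is the apparent singularity at $\gamma = d_n$; there the difference of the two $B$-terms also vanishes, and the recursion is to be interpreted in the limiting (removable-singularity) sense by continuity of $\psi_{n-1}$. Once these points are handled, the identity follows by direct computation, and induction on $n$ starting from $\psi_0\equiv 0$ produces the recursive scheme.
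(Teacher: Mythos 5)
Your proposal is correct and follows essentially the same route as the paper: the first-event decomposition giving $p_n(u)=\int_0^\infty \lambda n\,e^{-(\lambda n+\vartheta)t}\,{\mathbb P}(Z_{n-1}+L\geqslant u+rnt)\,{\rm d}t$, the change of variable $v=u+rnt$ with swapped integration order, the two standard transform identities plus independence of $Z_{n-1}$ and $L$, and the base case $\psi_0\equiv 0$. Your remark on the removable singularity at $\gamma=d_n$ is a sensible extra observation, but otherwise the argument coincides with the paper's proof.
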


\begin{remark}\label{R1a}{\em 
Interestingly, one could interpret the departure of an obligor as a time change: 
the default arrival rate drops from $\lambda n$ to $\lambda (n-1)$, and simultaneously the aggregate income per time unit drops from $rn$ to $r(n-1)$. As a consequence, in the infinite-horizon setting ($\vartheta=0$, that is) the recursion in Theorem \ref{TH1} greatly simplifies. 
}$\hfill\Diamond$\end{remark}

\begin{remark}\label{R1}{\em 
Upon inspecting the above proof, it is readily checked that it has not been used that the income rate is proportional to the number of obligors present; similarly, it is not crucial that the time till the next default when there are still $i$ obligors is exponential with parameter $\lambda i$. This effectively means that we can work with an income rate $r_i$ (rather than $ri$) and a default rate $\lambda_i$ (rather than $\lambda i$) during times that there are $i$ obligors left. We thus obtain the recursion
\[\psi_{n}(\gamma)= \frac{\lambda_n}{\lambda_n+\vartheta}\frac{1}{\gamma}+\frac{\lambda_n}{\lambda_n +\vartheta-\gamma r_n}\left(B\left(\frac{\lambda_n +\vartheta}{r_n},\psi_{n-1}\left(\frac{\lambda_n +\vartheta}{r_n}\right)\right)-B\left(\gamma,\psi_{n-1}(\gamma)\right)\right),\]
where $\psi_0(\gamma)\equiv 0$. It is also remarked that one can make the loss distribution dependent on the number of obligors in the system, by working with the transform $\beta_i(\cdot)$ when there are still $i$ obligors that have not gone into default yet.
}$\hfill\Diamond$\end{remark}

\begin{remark}\label{rem2}{\em
An interesting special case relates to the situation in which $r_n = r$ and $\lambda_n=\lambda$, i.e., the conventional Cram\'er-Lundberg model. Sending $n\to\infty$, one should recover the (transient version of the) Pollaczek-Khinchine formula. As an illustration, we show this computation for $\vartheta=0$, writing $a$ for $\lambda/r$ and assuming that $-a\ell'(0)<1$. We obtain the relation, with the limit of $\psi_n(\cdot)$ being denoted by $\psi(\cdot)$,
\[\psi(\gamma) = \frac{1}{\gamma}+\frac{a}{a-\gamma}\big(B(a,\psi(a))-B(\gamma,\psi(\gamma)\big).\]
It yields, after some elementary algebra, that 
\[1-\gamma\psi(\gamma) = \frac{\gamma}{\gamma-a+a\ell(\gamma)}\ell(a)(1-a\psi(a)).\]
The constant $\ell(a)(1-a\psi(a))$ can be identified by observing that the left-hand side goes to $1$ as $\gamma\downarrow 0$; hence, an application of H\^opital's rule yields that
\[\ell(a)(1-a\psi(a)) =\lim_{\gamma\downarrow 0}\frac{\gamma-a+a\ell(\gamma)}{\gamma} = 1+a\ell'(0).\]
We conclude
\[\psi(\gamma) =\frac{1}{\gamma}-\frac{1+a\ell'(0)}{\gamma-a+a\ell(\gamma)},\]
which directly corresponds to the Pollaczek-Khinchine formula \cite{AsmussenAlbrecher,DM}. Our new results can be thus be seen as a true generalization of the classical results from ruin theory. $\hfill\Diamond$
}\end{remark}

\begin{remark}{\em
The recursion featuring in Thm.\ \ref{TH1} can be made more explicit when working with its generating function. To demonstrate this, we focus on the case of $\vartheta=0$, $r_n=rn$, and $\lambda_n=\lambda n$. We have, again with $a=\lambda/r$,
\[\psi_n(\gamma) = \frac{1}{\gamma} +\frac{a}{a-\gamma}\left(\ell(a)\left(\frac{1}{a}-\psi_{n-1}(a)\right)-\ell(\gamma)\left(\frac{1}{\gamma}-\psi_{n-1}(\gamma)\right)\right).\]
We thus obtain that, using that $\psi_0(\gamma)=0$, 
\begin{align*}
\Psi(z,\gamma)&:= \sum_{n=1}^\infty z^n\psi_n(\gamma)\\
&=\sum_{n=1}^\infty z^n\frac{1}{\gamma}+z\frac{a}{a-\gamma} \sum_{n=1}^\infty z^{n-1}\left(\ell(a)\left(\frac{1}{a}-\psi_{n-1}(a)\right)-\ell(\gamma)\left(\frac{1}{\gamma}\psi_{n-1}(\gamma)\right)\right)\\
&=\frac{z}{1-z}\frac{1}{\gamma}+z\frac{a}{a-\gamma}\left(\ell(a)\left(\frac{1}{a{(1-z)}}-\Psi(z,a)\right)-\ell(\gamma)\left(\frac{1}{\gamma{(1-z)}}-\Psi(z,\gamma)\right)\right) .\end{align*} 
We conclude that \[\Psi(z,\gamma)=\frac{1}{a-\gamma-za\,\ell(\gamma)}\,\left( \frac{z}{1-z} \frac{a-\gamma}{\gamma} + za\, \ell(a) \left(\frac{1}{a{(1-z)}}-\Psi(z,a)\right)-\frac{z}{1-z} \frac{ a\,\ell(\gamma)}{\gamma}\right).\]
We are thus left with determining $\Psi(z,a)$. 
For $a$ and $z$ fixed there is a unique positive $\gamma\equiv \gamma(z,a)$ for which the denominator equals 0 (as follows from the fact that $\nu(\gamma):=a-\gamma-za\,\ell(\gamma)$ is concave with $\nu(0)=a(1-z)>0$ and $\nu(\gamma)\to-\infty$ as $\gamma\to\infty$). We therefore have that in $\gamma\equiv \gamma(z,a)$ the numerator should equal $0$ as well. 
This leads to
\begin{align*}\Psi(z,a) &= \frac{1}{a{(1-z)}}+\frac{1}{1-z}\frac{1}{\gamma(z,a)\,\ell(a)}\left(\frac{a-\gamma(z,a)}{a} -\ell(\gamma(z,a))\right)\\
&=\frac{1}{a{(1-z)}}+{\frac{a-\gamma(z,a)-a\,\ell(\gamma(z,a))}{(1-z)\,a\gamma(z,a)\,\ell(a)}}.\end{align*}
Combining the above, we have thus identified \[\Psi(z,\gamma)=\frac{z}{1-z}\frac{1}{a-\gamma-za\,\ell(\gamma)}\left(\frac{a-\gamma -a\,\ell(\gamma)}{\gamma}-\frac{a-\gamma(z,a)-a\,\ell(\gamma(z,a))}{\gamma(z,a)}\right).\] 
By multiplying with $(1-z)$, we obtain the transform at a geometrically distributed time with success probability $z$. Sending $z\uparrow 1$, and realizing that $\gamma(1,a)=0$, we recover the stationary result discussed in Remark \ref{rem2}. 
 $\hfill\Diamond$ }                                                                                                                                                                                                                                                                                                                                                                                                                                                                                                                                                                                                                                                                                                                                                                                                                                                                                                                                                                                                                                                                                                                                                                                                                                                                                                                                                                                                                                                                                                                                                                                                                                                                                                                                                                                                                                                                                                     
\end{remark}

\section{Asymptotics, efficient simulation, and uniform bound}\label{asy}
The previous section provides us with a way of computing $p_n(u,t).$ Here one should realize that $\psi_n(\gamma)$ is a (double) transform, so that numerical Laplace inversion needs to be applied in order to evaluate $p_n(u,t)$. Over the past decades significant progress has been made in the domain of Laplace inversion; see for instance the fast, accurate, and generally applicable algorithms described in\ \cite{AW,dI}. If one wishes to avoid numerical inversion, two frequently used alternatives are  (i)~asymptotic techniques, and (ii)~simulated-based estimation. In approach (i), one scales one or more of the model parameters, and aims at finding an explicit expression for the quantity under study (in our case the ruin probability) in the regime that this scaling parameter grows large. Approach (ii) has the intrinsic drawback that, in order to obtain reliable estimates in the domain of small ruin probabilities, many runs are needed. These issues can be remedied by simulating under a suitably chosen alternative measure rather than the actual one, and correcting the simulation output by likelihood ratios; this method is known as importance sampling. 

In this section we present a series of results that help to quantify the ruin probability $p_n(u,t)$ without the need to resort to numerical inversion. Our findings come in three flavors. In the first place we find, for a given $u$ and $t$, the asymptotics of $p_n(nu,t)$ as $n$ grows large; i.e., we scale the initial capital level by the initial number of obligors.
Secondly, we derive a uniform upper bound on $p_n(u,t)$, comparable to the well-known Lundberg inequality for the conventional Cram\'er-Lundberg model. Finally, we develop a provably efficient importance-sampling based simulation algorithm. 
Importantly, in this section we can lift the assumption of exponentially distributed time-to-defaults.

\subsection{Notation and preliminaries}
Throughout this entire section we let the times-to-default $T_1,\ldots,T_n$ be non-negative i.i.d.\ random variables, with density $f(\cdot)$ and distribution function $F(\cdot)$, distributed as a generic random variable $T$.
Let $Z_n(t)$ be the net {cumulative loss amount} at time $t\geqslant 0$, given that at time $0$ there are $n\in{\mathbb N}$ obligors present. We denote, for $i=1,\ldots,n$ and $t\geqslant 0$, by $W_i(t)$ the net {cumulative loss amount} of the $i$-th obligor at time $t$. By distinguishing between the scenario that obligor $i$ has gone into default at time $t$ and its complement, we can write $W_i(t)$ as
\begin{equation}\label{weetje}W_i(t):=1_{\{T_i\leqslant t\}}L_i-r\min\{T_i,t\}.\end{equation}
We define the moment generating function ${\mathbb E}\, e^{\alpha L}$ of the loss $L$ by $\bar\ell(\alpha):=\ell(-\alpha)$. Then, due to fact that the obligors are statistically identical,
\[{\mathbb E}\,e^{\alpha Z_n(t)} =\left({\mathbb E}\,e^{\alpha W_1(t)} \right)^n.\]
In addition, we can compute the moment generating function of the net {loss} amount of a single obligor at time $t$. By conditioning on the time-to-default, using \eqref{weetje},
\begin{align*}
\omega_t(\alpha):={\mathbb E}\,e^{\alpha W_1(t)}&=\bar\ell(\alpha)\int_0^t f(s) e^{-r\alpha s}{\rm d}s  + 
e^{-r\alpha t}\int_t^\infty f(s) {\rm d}s\\
&=\bar\ell(\alpha)\int_0^t f(s) e^{-r\alpha s}{\rm d}s  + 
e^{-r\alpha t} (1-F( t)).
\end{align*}
For instance, in the special case that the time-to-defaults are exponentially distributed with mean $\lambda^{-1}$, we have
\[\omega_t(\alpha)=\left(1-e^{-(\lambda+r\alpha)t}\right) \frac{\lambda}{\lambda+r\alpha}\bar\ell(\alpha) + e^{-(\lambda+r\alpha)t}.\]

\subsection{Large-deviations asymptotics}\label{subsec_LD}
The goal of this subsection is to establish a limit theorem for our ruin probability, given that we start with $n$ obligors and an initial capital reserve level $nu>0$, as $n$ grows large. In other words, we analyze how the probability
\begin{equation}\label{qn}q_n(t):=  p_n(nu,t)={\mathbb P}\left(\exists s\in[0,t]: Z_n(s) \geqslant nu\right)= {\mathbb P}\left(\exists s\in[0,t]:\sum_{i=1}^n W_i(s) \geqslant nu\right)\end{equation}
behaves as $n\to\infty.$
We do so under the evident `rarity condition' that, for all $t\geqslant 0$, ${\mathbb E}Z_n(t)$ is smaller than $nu$, or, equivalently, 
\[\sup_{t\geqslant 0}\big({\mathbb P}(T\leqslant t) \,{\mathbb E}L - r\,{\mathbb E}\min \{T,t\} \big) < u,\]
where we use that ${\mathbb E}W_1(t)=\omega'_t(0) = {\mathbb P}(T\leqslant t) \,{\mathbb E}L - r\,{\mathbb E}\min \{T,t\}.$
We start by establishing a lower bound. The underlying principle is that the probability of a union of events is bounded from below by the probability of the most likely event among them. This entails that, for any $s\in[0,t]$ we have that $q_n(t)\geqslant \check q_n(s)$, where
\[\check q_n(s):= {\mathbb P}\left(\sum_{i=1}^n W_i(s) \geqslant nu\right).\]
Define the Legendre transform pertaining to $W_1(s)$:
\[I(s):=\sup_{\alpha}\left(\alpha u - \log \omega_s(\alpha)\right).\]
Because of the rarity condition $\omega'_s(0) <u$ for all $s\geqslant 0$, we can restrict ourselves to maximizing over $\alpha>0$ only;
we define $\alpha^\star(s):=  \arg\sup_{\alpha}\left(\alpha u - \log \omega_s(\alpha)\right).$
By Cram\'er's theorem \cite{DZ}, we immediately have that, for any $s\in[0,t]$,
\begin{equation}\label{LBS}\liminf_{n\to\infty}\frac{1}{n}\log q_n(t) \geqslant\liminf_{n\to\infty}\frac{1}{n}\log \check q_n(s) =
-\alpha^\star(s) u + \log \omega_s(\alpha^\star(s))=-I(s).\end{equation}
We also define 
\[t^\star:= \arg\inf_{s\in[0,t]}I(s),\]
which has the informal interpretation of the most likely time $Z_n(\cdot)$ exceeds $nu.$
From the fact that the lower bound \eqref{LBS} applies for any $s\in[0,t]$, we thus obtain that
\[\liminf_{n\to\infty}\frac{1}{n}\log q_n(t) \geqslant-\inf_{s\in[0,t]}I(s) = -I(t^\star).\]

We proceed by proving that $-I(t^\star)$ is also an upper bound on the decay rate of $q_n(t)$. The first step is to realize that ruin occurs at the default time of one of the $n$ obligors. As a consequence, we can rewrite $q_n$ in terms of the union of $n$ events:
\[q_n(t)={\mathbb P}\left(\exists j\in\{1,\ldots,n\}: T_j\in[0,t],  \sum_{i=1}^n W_i(T_j) \geqslant nu\right),\]
instead of the union of uncountable many events featuring in the representation (\ref{qn}).
By the union bound, we obtain that this probability $q_n(t)$ is majorized by $n\hat q_n(t)$, where
\[\hat q_n(t):={\mathbb P}\left(T_1\in[0,t],  \sum_{i=1}^n W_i(T_1) \geqslant nu\right).\]
As $n^{-1} \log n \to 0$, it suffices to prove that $\limsup_{n\to\infty} n^{-1} \log \hat q_n(t)\leqslant -I(t^\star).$ To this end,  by conditioning on $T_1$,
\[\hat q_n(t) = \int_0^t f(s) \,{\mathbb P}\left( \sum_{i=2}^{n} W_i(s) +L_1 -rs \geqslant nu\right) {\rm d}s.\]
Then observe that the $W_i(T_1)$ are dependent, but once conditioned on $T_1=s$ they have become independent.  The next step is to apply the Markov inequality: for any $\alpha\geqslant 0$, with $L_1$ being independent from $W_2(s),\ldots,W_{n}(s)$,
\begin{align*}
{\mathbb P}\left(
 \sum_{i=2}^{n} W_i(s) +L_1 -rs \geqslant nu
 \right)&=
{\mathbb P}\left( \exp\left({\alpha \sum_{i=2}^{n} W_i(s) +\alpha L_1}\right) \geqslant \exp({\alpha (nu+rs)})\right)  \\
&\leqslant (w_s(\alpha))^{n-1}\bar\ell(\alpha) \,e^{-\alpha(nu+rs)}
\leqslant (w_s(\alpha))^{n-1}\bar\ell(\alpha) \,e^{-\alpha(n-1)u}
.\end{align*}
Upon combining the above, we have thus found that for any $\alpha(\cdot)\geqslant 0$,
\[\limsup_{n\to\infty}\frac{1}{n}\log \hat q_n(t) \leqslant \limsup_{n\to\infty}\frac{1}{n}\log 
\int_0^t f(s) \,(w_s(\alpha(s)))^{n-1} \bar\ell(\alpha(s))\,e^{-\alpha(s)\,(n-1)u} {\rm d}s.\]
Recall that, for any $t\geqslant 0$,  $I(t)=\alpha^\star(t) u - \log \omega_t(\alpha^\star(t)).$
Plugging in $\alpha(\cdot)=\alpha^\star(\cdot),$ we thus obtain, in the second inequality using the definition of $t^\star$,
\begin{align}\nonumber\limsup_{n\to\infty}\frac{1}{n}\log \hat q_n (t)&\leqslant \limsup_{n\to\infty}\frac{1}{n}\log 
\int_0^t f(s)\, \bar\ell(\alpha^\star(s))\,e^{-(n-1)I(s)} {\rm d}s\\ \nonumber
&\leqslant \limsup_{n\to\infty}\frac{1}{n}\log 
\int_0^t f(s)\, \bar\ell(\alpha^\star(s))\,e^{-(n-1)I(t^\star)} {\rm d}s\\
&=-I(t^\star)+ \limsup_{n\to\infty}\frac{1}{n}\log 
\int_0^t f(s)\, \bar\ell(\alpha^\star(s))\, {\rm d}s.\label{laatste}
\end{align}
Observe that we are done if we succeed in proving that the second term in \eqref{laatste} is $0$, for which it suffices to prove that the integral appearing in this term is finite.
To this end, first observe that, with $\tau(\alpha):= {\mathbb E}\,e^{\alpha T}$, 
\[\lim_{t\to\infty} \omega_t(\alpha) = \bar\ell(\alpha) \tau(-r\alpha)=:\Delta(\alpha),\]
so that $\alpha^\star(\infty)$ solves $\Delta'(\alpha)/\Delta(\alpha) = u$. 

\begin{assumption} \label{ASS1} The function $\alpha^\star(\cdot)$ is bounded on $[0,t]$.
\end{assumption}

Under Assumption \ref{ASS1}, we have $\sup_{s\in[ 0,t]}\alpha^\star(s)\leqslant M$ for some finite $M$. Note that this holds whenever the function $\alpha^\star(\cdot)$ is continuous, whereas in case $t=\infty$ we additionally require $\alpha^\star(\infty)<\infty$. 
With this assumption in place and using that $\alpha\mapsto \bar\ell(\alpha)$ is increasing, we conclude that 
\[\int_0^t f(s)\, \bar\ell(\alpha^\star(s))\, {\rm d}s \leqslant \bar\ell(M) \int_0^t f(s)\, {\rm d}s \leqslant \bar\ell(M)<\infty.\]
Summarizing, we have shown
\[\limsup_{n\to\infty}\frac{1}{n}\log q_n(t) \leqslant -I(t^\star).\]
We have arrived at the following result.
\begin{theorem} \label{TH31} As $n\to\infty$, under Assumption $\ref{ASS1}$, 
\[\frac{1}{n}\log q_n(t) \to -I(t^\star).\]
\end{theorem}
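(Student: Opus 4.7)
My plan is to establish matching lower and upper logarithmic bounds on $q_n(t)$, both equal to $-I(t^\star)$. The overall strategy is a standard large-deviations argument; the main subtlety on the upper bound side is that the event of interest is a supremum of the random walk $\sum_i W_i(s)$ over a continuum of times $s\in[0,t]$, so a direct Chernoff bound does not apply.

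For the lower bound, I would note that for any fixed $s\in[0,t]$, $q_n(t)\geqslant {\mathbb P}\big(\sum_{i=1}^n W_i(s)\geqslant nu\big)$, since the instantaneous ruin event is contained in the supremum event. The summands $W_i(s)$ are i.i.d.\ with moment generating function $\omega_s(\alpha)$, so Cram\'er's theorem gives decay rate $I(s)=\sup_\alpha(\alpha u-\log\omega_s(\alpha))$; the rarity condition $\omega_s'(0)<u$ guarantees $\alpha^\star(s)>0$, making the rate non-degenerate. Optimizing the lower bound over $s\in[0,t]$ yields $\liminf_{n\to\infty}n^{-1}\log q_n(t)\geqslant -\inf_{s\in[0,t]}I(s)= -I(t^\star)$.

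For the upper bound, the key observation is that between default epochs the aggregate process $\sum_i W_i(\cdot)$ has strictly negative drift, so it can only reach its running maximum at one of the default times $T_1,\ldots,T_n$. This converts the uncountable-union event into $\{\exists j\in\{1,\ldots,n\}: T_j\in[0,t],\ \sum_i W_i(T_j)\geqslant nu\}$, and a union bound gives $q_n(t)\leqslant n\,\hat q_n(t)$ where $\hat q_n(t)={\mathbb P}(T_1\in[0,t],\sum_i W_i(T_1)\geqslant nu)$; the prefactor $n$ is logarithmically negligible. Conditioning on $T_1=s$ decouples the remaining $W_i(s)$, $i\geqslant 2$, making them i.i.d.; a Chernoff bound at exponent $\alpha^\star(s)$ then bounds the conditional probability by $(\omega_s(\alpha^\star(s)))^{n-1}\bar\ell(\alpha^\star(s))e^{-\alpha^\star(s)(n-1)u}=\bar\ell(\alpha^\star(s))e^{-(n-1)I(s)}$, which is further bounded by $\bar\ell(\alpha^\star(s))e^{-(n-1)I(t^\star)}$ by definition of $t^\star$.

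The main obstacle, and precisely the reason for Assumption \ref{ASS1}, is ensuring that the residual integral $\int_0^t f(s)\,\bar\ell(\alpha^\star(s))\,{\rm d}s$ is finite and hence does not contribute to the exponential rate. Without control of $\alpha^\star(\cdot)$ on $[0,t]$ (which includes the case $t=\infty$), the moment generating function $\bar\ell(\alpha^\star(s))$ could blow up. Under Assumption \ref{ASS1}, $\sup_{s\in[0,t]}\alpha^\star(s)\leqslant M<\infty$, and monotonicity of $\bar\ell$ gives $\int_0^t f(s)\bar\ell(\alpha^\star(s)){\rm d}s\leqslant \bar\ell(M)<\infty$; taking logarithms, dividing by $n$, and letting $n\to\infty$ then yields $\limsup_{n\to\infty}n^{-1}\log q_n(t)\leqslant -I(t^\star)$, completing the proof.
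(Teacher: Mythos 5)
Your proposal is correct and follows essentially the same route as the paper: a Cramér lower bound at each fixed $s$ optimized over $s\in[0,t]$, and an upper bound via restricting to the default epochs $T_j$, a union bound, conditioning on $T_1=s$, a Chernoff bound at $\alpha^\star(s)$, and Assumption \ref{ASS1} to ensure $\int_0^t f(s)\,\bar\ell(\alpha^\star(s))\,{\rm d}s\leqslant\bar\ell(M)<\infty$ so the prefactor is logarithmically negligible. Your explicit remark that the aggregate loss process decreases between defaults (so the supremum is attained at default times) is a slightly fuller justification of the step the paper states tersely; otherwise the arguments coincide.
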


\subsection{Efficient simulation}\label{subsec_is}
As the above theorem only provides us with the logarithmic asymptotics of $q_n$, it is inherently imprecise. For instance, if the true asymptotic shape of $q_n$ is $n^\alpha \,\exp({-nI(t^\star)})$ for some $\alpha\in{\mathbb R}$, or $\exp(n^\eta) \exp({-nI(t^\star)})$
for some $\eta\in (0,1)$, the effect of the $\alpha$ and $\eta$ is not visible. 
One can get accurate estimates in an efficient way, however, applying importance sampling.  Below we present an importance sampling algorithm, which we prove to be logarithmically efficient. 

The key idea is that we decompose our rare-event probability $q_n$ into $n$ rare-event probabilities, which we will be dealing with separately. We write
\begin{equation}\label{sum}q_n(t) = \sum_{j=1}^n q_{nj}(t),\end{equation}
where 
\[q_{nj}(t):={\mathbb P}\left({\mathscr F}_j\right),\:\:\:\:
{\mathscr F}_j:=
{\mathscr E}_j\cap \bigcap_{i=1}^{j-1} {\mathscr E}_i^{\rm c},\:\:\:\:
{\mathscr E}_j:= \left\{T_j\in[0,t],
 \sum_{i\not=j} W_i(T_j) +L_j -rT_j \geqslant nu
\right\};\]
the validity of \eqref{sum} is due to the events ${\mathscr F}_j$ being (by construction) disjoint, while the union of the ${\mathscr E}_j$ equals the union of the ${\mathscr F}_j$. The problem of efficiently estimating $q_n(t)$ thus reduces to the problem of efficiently estimating each of the $q_{nj}(t)$ (and adding up all the resulting estimates). 

Fix a $j\in\{1,\ldots,n\}$ and focus on the estimation of $q_{nj}$. We now define an importance sampling probability measure ${\mathbb Q}$.
\begin{itemize}
\item[$\circ$]
Under ${\mathbb Q}$ the density of $T_j$ remains $f(\cdot)$. 
\item[$\circ$] Conditionally on $T_j=s$, the moment generating function of $L_j$ becomes
\[\bar\ell^{\mathbb Q}(\alpha) = \frac{\bar\ell(\alpha+\alpha^\star(s)}{\bar\ell(\alpha^\star(s))}.
\]
Sampling $L_j$ from ${\mathbb Q}$ amounts to sampling from an exponentially twisted version of the actual distribution. This is a standard procedure in applied probability; for many frequently used distributions the twisted distribution remains within the same class of distributions, but with different parameters. For instance, the $\alpha$-twisted version of an exponentially distributed random variable with parameter $\mu$ corresponds to an exponentially distributed random variable with parameter $\mu-\alpha$ (requiring that $\alpha\in[0,\mu)$).
\item[$\circ$] Conditionally on $T_j=s$, the moment generating function of $W_i(s)$ (for $i\not= j$) becomes
\begin{equation}\label{alp}\omega_s^{\mathbb Q}(\alpha):= \frac{\omega_s(\alpha+\alpha^\star(s))}{\omega_s(\alpha^\star(s))}.\end{equation}
To decide whether the event ${\mathscr F}_j$ applies, we have to sample the default times $T_i$ and (if $T_i<t$) the losses $L_i$, for $i\not=j$, in accordance with \eqref{alp}. This can be done as follows. By (\ref{alp}), the exponentially twisted version of $W_i(s)$ has the moment generating function
\begin{align*}\omega_s^{\mathbb Q}(\alpha)=
\frac{1}{\omega_s(\alpha^\star(s))}&\left(\int_0^s f(v)\, e^{-(\alpha+\alpha^\star(s))\, rv} \bar\ell(\alpha+\alpha^\star(s)){\rm d}v \:+\right.\\ &\hspace{3mm}\left.\int_s^\infty f(v)\, e^{-(\alpha+\alpha^\star(s))\, rs} {\rm d}v\right).\end{align*}
From this identity we observe that the $L_i$ can be sampled from a distribution with moment generating function $\bar\ell^{\mathbb Q}(\cdot)$, as defined above, whereas the density $f^{\mathbb Q}(\cdot)$ of the $T_i$ (for $i\not=j$) becomes
\[f^{\mathbb Q}(v) =\frac{1}{\omega_s(\alpha^\star(s))}f(v)\left( e^{-\alpha^\star(s)\, rv} \bar\ell(\alpha^\star(s))1_{\{v\leqslant s\}} +  e^{-\alpha^\star(s)\, rs} 1_{\{v>s\}}\right).\]
\end{itemize}
We proceed by detailing the importance-sampling based simulation procedure, and establishing its asymptotic efficiency. 
To this end, we first observe that a generic sample of the likelihood ratio, say ${\mathscr L}_j$, has the form
\[
{e^{-\alpha^\star(T_j)\,L_j}}\cdot{\bar\ell(\alpha^\star(T_j))}
\prod_{i\not=j} \left({e^{-\alpha^\star(T_j)\,W_i(T_j)}}\cdot{\omega_{T_j}(\alpha^\star(T_j))}\right).\]
Recall that on the event ${\mathscr F}_j$ we have $ \sum_{i\not= j} W_i(T_j) +L_j -rT_j \geqslant nu$. As a consequence, on the event ${\mathscr F}_j$ the likelihood ratio ${\mathscr L}_j$ is majorized by
\begin{align*}e^{-\alpha^\star(T_j)(nu + r T_j)}\cdot{\bar\ell(\alpha^\star(T_j))}&\cdot\big(\omega_{T_j}(\alpha^\star(T_j))\big)^{n-1}\\
&\leqslant e^{-\alpha^\star(T_j)(n-1)u}\cdot{\bar\ell(\alpha^\star(T_j))}\cdot\big(\omega_{T_j}(\alpha^\star(T_j))\big)^{n-1}\\
&=\bar\ell(\alpha^\star(T_j))\,e^{-(n-1)\,I({T_j})} \leqslant \bar\ell(M)\,e^{-(n-1)\,I({T_j})}\\
&\leqslant  \bar\ell(M)\,e^{-(n-1)\,I({t^\star})},
\end{align*}
with $M$ as defined in Section \ref{subsec_LD} (where we let Assumption \ref{ASS1} be in force).
We thus find that, with ${\mathscr I}_j$ denoting the indicator function of ${\mathscr F}_j$, the almost-sure inequality
${\mathscr L}_j\,{\mathscr I}_j \leqslant \bar\ell(M)\,e^{-(n-1)\,I({t^\star})}$, and therefore
\[\sum_{j=1}^n {\mathscr L}_j\,{\mathscr I}_j \leqslant n \,\bar\ell(M)\,e^{-(n-1)\,I({t^\star})}.\]
Evidently, to obtain an estimator with good precision, we have to repeat the above experiment sufficiently often. Suppose, for each $j\in\{1,\ldots,n\}$, we perform $N\in{\mathbb N}$ independent trials. The corresponding likelihood ratios are denoted by 
${\mathscr L}_{j,k}$, and the indicator functions are ${\mathscr I}_{j,k}$, with $j\in\{1,\ldots,n\}$ and $k\in\{1,\ldots,N\}$. 
Our estimator thus becomes  
\[\xi_N:=\frac{1}{N} \sum_{k=1}^N \sum_{j=1}^n {\mathscr L}_{j,k}\,{\mathscr I}_{j,k},\]
which is (by construction) unbiased.
The next step is to analyze the performance of this estimator. To this end, we observe in relation to its second moment that
\[{\mathbb E}_{\mathbb Q}\left(\left(\sum_{j=1}^n {\mathscr L}_{j}\,{\mathscr I}_{j}\right)^2\right) \leqslant n^2 \,(\bar\ell(M))^2\,e^{-2(n-1)\,I({t^\star})},\]
with ${\mathbb E}_{\mathbb Q}(\cdot)$ denoting expectation under ${\mathbb Q}$.
We find the following upper bound for the second moment: 
\[\limsup_{n\to\infty}\frac{1}{n}\log {\mathbb E}_{\mathbb Q}\left(\left(\sum_{j=1}^n {\mathscr L}_{j}\,{\mathscr I}_{j}\right)^2\right) \leqslant -2 I(t^\star).\]
By Theorem \ref{TH31}, and in addition using that variances are non-negative, we also have the corresponding lower bound:
\begin{align*}
\liminf_{n\to\infty}\frac{1}{n}\log {\mathbb E}_{\mathbb Q}\left(\left(\sum_{j=1}^n {\mathscr L}_{j}\,{\mathscr I}_{j}\right)^2\right)& \geqslant \liminf_{n\to\infty}\frac{2}{n}\log {\mathbb E}_{\mathbb Q}\left(\sum_{j=1}^n {\mathscr L}_{j}\,{\mathscr I}_{j}\right) \\&= \liminf_{n\to\infty}\frac{2}{n}\log q_n(t) = -2 I(t^\star).\end{align*}
The above bounds lead to the following conclusion, which in practical terms entails that the number of runs needed to obtain an estimate of a given relative precision, grows sub-exponentially in $n$. For the definition of logarithmic efficiency, and related performance notions in rare-event simulation, we refer to \cite[Ch. VI]{AG}.
\begin{theorem} Under Assumption $\ref{ASS1}$, the estimator $\xi_N$ is logarithmically efficient as $N\to\infty$.
\end{theorem}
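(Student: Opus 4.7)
The plan is to verify the standard criterion for logarithmic efficiency: writing $Y_n := \sum_{j=1}^n {\mathscr L}_j\,{\mathscr I}_j$ for a single replication, I need to establish
\[\lim_{n\to\infty}\frac{1}{n}\log {\mathbb E}_{\mathbb Q}\big(Y_n^2\big) = 2\lim_{n\to\infty}\frac{1}{n}\log q_n(t) = -2I(t^\star).\]
The statement for $\xi_N$ follows immediately, since $\xi_N$ is the empirical average of $N$ i.i.d.\ copies of $Y_n$ and a factor $1/N$ plays no role in the exponential decay rate. By construction ${\mathbb E}_{\mathbb Q}(\xi_N) = {\mathbb E}_{\mathbb Q}(Y_n) = q_n(t)$, so the unbiasedness needed for the lower bound comes for free.

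For the upper bound on the second moment I would reuse the pointwise majorization already derived above the theorem statement: under Assumption \ref{ASS1} one has $M:= \sup_{s\in[0,t]}\alpha^\star(s)<\infty$, and on the event ${\mathscr F}_j$ the likelihood ratio satisfies ${\mathscr L}_j \leqslant \bar\ell(M)\,e^{-(n-1)I(t^\star)}$. Summing over $j$ and squaring gives the almost-sure inequality $Y_n^2 \leqslant n^2\,(\bar\ell(M))^2\,e^{-2(n-1)I(t^\star)}$, and taking ${\mathbb E}_{\mathbb Q}$ then dividing by $n$ and letting $n\to\infty$ yields $\limsup_{n\to\infty} n^{-1}\log {\mathbb E}_{\mathbb Q}(Y_n^2) \leqslant -2I(t^\star)$.

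For the matching lower bound I would invoke Jensen's inequality, ${\mathbb E}_{\mathbb Q}(Y_n^2) \geqslant ({\mathbb E}_{\mathbb Q}(Y_n))^2 = q_n(t)^2$, and then apply Theorem \ref{TH31} to obtain $\liminf_{n\to\infty} n^{-1}\log {\mathbb E}_{\mathbb Q}(Y_n^2) \geqslant -2I(t^\star)$. The two bounds match, which is precisely the definition of logarithmic efficiency from \cite[Ch.\ VI]{AG}.

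The genuinely substantive step is the almost-sure likelihood-ratio bound, which was already carried out in the preamble to the theorem; everything else is routine. That step is where Assumption \ref{ASS1} is indispensable: without uniform boundedness of $\alpha^\star(\cdot)$ on $[0,t]$, the prefactor $\bar\ell(\alpha^\star(T_j))$ appearing in a generic sample of ${\mathscr L}_j$ could be unbounded in $T_j$, and the clean exponential decay $e^{-(n-1)I(t^\star)}$ — the sole mechanism delivering the correct decay rate $-2I(t^\star)$ — would be lost.
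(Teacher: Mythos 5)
Your proposal is correct and follows essentially the same route as the paper: the almost-sure majorization ${\mathscr L}_j\,{\mathscr I}_j \leqslant \bar\ell(M)\,e^{-(n-1)I(t^\star)}$ under Assumption \ref{ASS1} gives the upper bound on the second moment, and the lower bound via Jensen (the paper phrases it as ``variances are non-negative'') combined with Theorem \ref{TH31} matches it, yielding the decay rate $-2I(t^\star)$ and hence logarithmic efficiency. No gaps.
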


\subsection{Uniform bound} \label{Unif}
Intrinsic drawbacks of the large-deviations asymptotics is that they only kick in for large $n$, and they provide us with the decay rate only. This motivates the search for a uniform upper bound on the ruin probability $p_n(u,t)$. The result is a Lundberg-type inequality derived along the same lines was done in \cite[Section XIII.5a]{AsmussenAlbrecher} for the conventional Cram\'er-Lundberg model in which claims (or losses in the credit context) arrive according to a fixed-intensity Poisson process. We focus on the situation that when there are $n$ obligors the time to the first default is exponentially distributed with mean $\lambda_n^{-1}$ and the income rate is $r_n.$ Let $\gamma_n$ be the positive solution for $\gamma$ in 
\[\bar\ell(\gamma) \frac{\lambda_n}{\lambda_n+\gamma r_n}=1.\]

\vb

\begin{theorem}\label{th_upper}
Suppose that $\gamma_n$ is non-increasing in $n$. Then 
\[p_n(u,t)\leqslant p_n(u,\infty)\leqslant e^{-\gamma_n u}.\]
\end{theorem}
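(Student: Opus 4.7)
The first inequality $p_n(u,t)\leqslant p_n(u,\infty)$ is immediate from the monotonicity in $t$ of the event $\{\exists s\in[0,t]:X_s<0\}$, so the real content is the exponential bound. The plan is to prove $p_n(u,\infty)\leqslant e^{-\gamma_n u}$ by induction on the number of obligors $n$, using the same one-step conditioning that drove the recursion in Section \ref{subsec_analysis}.

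The base case $n=0$ is trivial since $p_0\equiv 0$. For the inductive step, I would condition on the time $\tau$ of the first default (exponential with rate $\lambda_n$) and on the associated loss $L$. During $[0,\tau)$ the reserve grows linearly at rate $r_n$, and at time $\tau$ it jumps down by $L$; if the post-jump reserve is negative, ruin has occurred, and otherwise the process restarts as a transient Cram\'er-Lundberg model with $n-1$ obligors started from reserve $u+r_n\tau-L$. This yields the identity
\[
p_n(u,\infty)=\int_0^\infty\!\!\lambda_n e^{-\lambda_n \tau}\,\mathbb{E}\!\left[\mathbf{1}\{L>u+r_n\tau\}+p_{n-1}(u+r_n\tau-L,\infty)\,\mathbf{1}\{L\leqslant u+r_n\tau\}\right]{\rm d}\tau.
\]

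The key trick is then to bound both terms inside the expectation by a single exponential in $u+r_n\tau-L$. By the inductive hypothesis, $p_{n-1}(x,\infty)\leqslant e^{-\gamma_{n-1}x}$ for $x\geqslant 0$, and the monotonicity assumption $\gamma_n\leqslant\gamma_{n-1}$ lets me weaken this to $p_{n-1}(x,\infty)\leqslant e^{-\gamma_n x}$ for $x\geqslant 0$ (using $x\geqslant 0$). On the ruin event $\{L>u+r_n\tau\}$, the exponent $-\gamma_n(u+r_n\tau-L)$ is positive so $1\leqslant e^{-\gamma_n(u+r_n\tau-L)}$. Thus in both cases the integrand is dominated by $e^{-\gamma_n(u+r_n\tau-L)}$, giving
\[
p_n(u,\infty)\leqslant e^{-\gamma_n u}\,\bar\ell(\gamma_n)\int_0^\infty\!\!\lambda_n e^{-(\lambda_n+\gamma_n r_n)\tau}{\rm d}\tau
=e^{-\gamma_n u}\,\bar\ell(\gamma_n)\,\frac{\lambda_n}{\lambda_n+\gamma_n r_n}=e^{-\gamma_n u},
\]
where the final equality is exactly the defining equation of $\gamma_n$.

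The only subtle point, and the reason the hypothesis $\gamma_n\leqslant\gamma_{n-1}$ is imposed, is precisely the step where the inductive bound $e^{-\gamma_{n-1}x}$ is replaced by $e^{-\gamma_n x}$; without this monotonicity the inductive argument cannot carry the exponent through successive reductions of the obligor count. Everything else reduces to conditioning, the standard Markov-inequality-style bound, and the defining equation $\bar\ell(\gamma)\lambda_n/(\lambda_n+\gamma r_n)=1$. It is worth remarking that the same proof works verbatim with claim-size distribution depending on $n$ (as in Remark \ref{R1}), provided the corresponding $\gamma_n$ remain non-increasing.
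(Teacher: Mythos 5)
Your proof is correct and is essentially the paper's argument: the same one-step conditioning on the first default (the paper phrases it via $Y_n=L-r_nT_1$), the same induction on $n$ with the monotonicity $\gamma_n\leqslant\gamma_{n-1}$ invoked at the same point, and the same conclusion via ${\mathbb E}\,e^{\gamma_n Y_n}=\bar\ell(\gamma_n)\lambda_n/(\lambda_n+\gamma_n r_n)=1$.
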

\begin{proof} It is evident that $p_n(u,t)\leqslant p_n(u,\infty)$.
Let $Y_n$ be distributed as $L-r_n\,T_1$, where $T_1$ is assumed exponentially distributed with mean $\lambda_n^{-1}$ (independent of $L$).
Conditioning on $Y_n$ immediately yields
\[p_n(u,\infty)=\mathbb{P}(Y_{n}>u)+\int_{-\infty}^u p_{n-1}(u-y,\infty){\mathbb P}({Y_{n}}\in{\rm d}y).\]
We claim that this implies $p_n(u,\infty)\leqslant e^{-\gamma_n u}$. The proof is by induction. First note that the claim holds true for $n=0$ as $p_0(u,\infty)=0$ for all $u> 0$. Assuming the inequality holds true for $n-1$,\begin{align*}
p_{n}(u,\infty)&\leqslant\mathbb{P}(Y_{n}>u)+\int_{-\infty}^u e^{-\gamma_{n-1} (u-y)}\,{\mathbb P}({Y_{n}}\in{\rm d}y)\\
&\leqslant\mathbb{P}(Y_{n}>u)+\int_{-\infty}^u e^{-\gamma_{n} (u-y)}\,{\mathbb P}({Y_{n}}\in{\rm d}y)\\
&\leqslant e^{-\gamma_{n} u}\int^{\infty}_u e^{\gamma_{n} y}\,{\mathbb P}({Y_{n}}\in{\rm d}y)+\int_{-\infty}^u e^{-\gamma_{n} (u-x)}\,{\mathbb P}({Y_{n}}\in{\rm d}y)\\
&=e^{-\gamma_{n} u}\,{\mathbb E}\,e^{\gamma_{n} Y_n}=e^{-\gamma_{n} u}\,\bar\ell(\gamma_n) \frac{\lambda_n}{\lambda_n+\gamma_n r_n}=e^{-\gamma_n u},
\end{align*}
where in the second inequality it has been used that that $\gamma_n$ is non-increasing in $n$.
\end{proof}

\begin{remark}{\em 
In the special case the default arrival intensity $\lambda_n$ and the income rates $r_n$ are linear in the number of obligors $n$, it is readily checked that $\gamma_n$ does not depend on $n$. As a consequence, also the upper bound derived above does not depend on $n$. $\hfill\Diamond$
}\end{remark}

\section{Non-default losses, Markov modulation, \\Brownian perturbations, and multiple groups} \label{ext}

In this section we consider four important extensions of our base model.
\begin{itemize}
\item[$\circ$] In the first extension there are both losses due to defaults (reducing the number of obligors by one) and losses that do not correspond to defaults (leaving the number of obligors unchanged).
\item[$\circ$] Then we consider a model in which the dynamics are affected by a Markovian background process, thus creating dependence between the individual obligors.
\item[$\circ$] We proceed by analyzing a model in which the cumulative process between jumps behaves as a Brownian motion (rather than being linear).
\item[$\circ$] Finally we discuss an extension that allows heterogeneous obligors (by working with multiple groups). 
\end{itemize}
Note that, as opposed to the analysis presented in the previous section, in this section we let the default times be exponentially distributed. In principle, the four generalizations introduced above can be combined, but to keep the presentation as transparent as possible we have decided to discuss them separately.

\subsection{Non-default losses} In this subsection we consider the following extension of the model analyzed in Section \ref{S3} (or, actually, the more general one featured in Remark \ref{R1}). Next to losses due to defaults (happening at a Poisson rate $\lambda_n$ with the losses having Laplace transform $\ell(\cdot)$ when $n$ obligors are present) there are losses that do {\it not} correspond to defaults (happening at a Poisson rate $\lambda^\circ_n$ with the losses having Laplace transform $\ell^\circ(\cdot)$ when $n$ obligors are present).

We again start our derivations by conditioning on the first event, being the first default, the first loss (not leading to default), or the expiration of the exponential clock. If a default happens first, then we can still reach ruin, but now with $n-1$ obligors and an adapted initial reserve. In case the first event is a loss which does not correspond to a default, then we can still reach ruin with $n$ obligors but an adapted initial reserve. If the exponential clock expires, then we will not be facing ruin.

This idea can be formalized as follows. With $L^\circ$ denoting a generic random variable corresponding with a non-default loss, we obtain the relation
\begin{align*}p_n(u) &= \int_0^\infty e^{- (\bar \lambda_n+\vartheta) t}\Big(
\lambda_n\, {\mathbb P}(Z_{n-1}+L\geqslant u+r_nt)+ \lambda^\circ_n\,  {\mathbb P}(Z_n+L^\circ\geqslant u+r_nt)\Big){\rm d}t.
\end{align*}
Going through the same type of computations as those relied on in Section \ref{S3}, we end up with a relation between $\psi_n(\cdot)$ and $\psi_{n-1}(\cdot)$. More specifically, 
for any $\gamma\geqslant 0$, using the notation $\bar{\lambda}_n=\lambda_n+\lambda_n^\circ$, we find that
\begin{align}\nonumber\psi&_{n}(\gamma)= \frac{\bar\lambda_n}{\bar\lambda_n+\vartheta}\frac{1}{\gamma}\:+\\\nonumber&\:\:\:\frac{\lambda_n}{\bar\lambda_n+\vartheta-\gamma r_n}\left(B\left(\frac{\bar\lambda_n+\vartheta}{r_n},\psi_{n-1}\left(\frac{\bar\lambda_n+\vartheta}{r_n}\right)\right)-B\left(\gamma,\psi_{n-1}(\gamma)\right)\right)\:+\\&\:\:\:\frac{\lambda_n^\circ}{\bar\lambda_n+\vartheta-\gamma r_n}\left(B^\circ\left(\frac{\bar\lambda_n+\vartheta}{r_n},\psi_{n}\left(\frac{\bar\lambda_n+\vartheta}{r_n}\right)\right)-B^\circ\left(\gamma,\psi_{n}(\gamma)\right)\right),\label{psin}\end{align}
where $B^\circ(\cdot,\cdot)$ is defined as $B(\cdot,\cdot)$ but with $\ell(\cdot)$ replaced by $\ell^\circ(\cdot)$. 
Unfortunately, this relation between $\psi_n(\cdot)$ and $\psi_{n-1}(\cdot)$ cannot be directly written in terms of an explicit recursion (as opposed to the model without non-default losses; see Theorem \ref{TH1}). The $\psi_n(\cdot)$, however, can still be found recursively, using the following procedure.

To this end, we start by defining the (yet unknown) constants
\[A_n:=B^\circ\left(\frac{\bar\lambda_n+\vartheta}{r_n},\psi_{n}\left(\frac{\bar\lambda_n+\vartheta}{r_n}\right)\right).\]
Then, using that $\psi_0(\cdot)\equiv 0$, observe that $\psi_1(\gamma)$ obeys 
\begin{align}\nonumber\psi_1(\gamma) = \:&\frac{\bar\lambda_1}{\bar\lambda_1+\vartheta}\frac{1}{\gamma}+\frac{\lambda_1}{\bar\lambda_1+\vartheta-\gamma r_1}\left(\frac{r_1}{\bar\lambda_1+\vartheta}\ell\left(\frac{\bar\lambda_1+\vartheta}{r_1}\right)-\frac{\ell(\gamma)}{\gamma}\right)\:+\\
&\:\:\:\frac{\lambda_1^\circ}{\bar\lambda_1+\vartheta-\gamma r_1}\left(B^\circ\left(\frac{\bar\lambda_1+\vartheta}{r_1},\psi_{1}\left(\frac{\bar\lambda_1+\vartheta}{r_1}\right)\right)-B^\circ\left(\gamma,\psi_{1}(\gamma)\right)\right).\label{psi1_nd}\end{align}
We can rewrite (\ref{psi1_nd}), for a known function $F(\cdot)$, as
\[\psi_1(\gamma) = F(\gamma) +\frac{\lambda_1^\circ}{\bar\lambda_1+\vartheta-\gamma r_1}\left(A_1-\ell^\circ(\gamma)\left(\frac{1}{\gamma}-\psi_1(\gamma)\right)\right),\]
which can be rearranged to
\[1-\gamma \psi_1(\gamma) = 1 -\frac{\gamma F(\gamma)(\bar\lambda_1+\vartheta-\gamma r_1)+\gamma\lambda_1^\circ A_1-\lambda_1^\circ\ell^\circ(\gamma)}{\bar\lambda_1-\lambda_1^\circ\ell^\circ(\gamma)+\vartheta-\gamma r_1}.\]
As we know that $1-\gamma \psi_1(\gamma)$ is a Laplace transform, its value should be between 0 and 1 for any $\gamma\geqslant 0$.
Hence, any zero of the denominator is necessarily also a zero of the numerator. It is standard to verify that the numerator has a single positive zero, say $\bar\gamma$. Then it follows that 
\[A_1 = \frac{\ell^\circ(\bar\gamma)}{\bar{\gamma}}-F(\bar\gamma)\frac{\bar\lambda_1+\vartheta-\bar\gamma r_1}{\lambda_1^\circ}.\]
Now that we have found $A_1$ and hence $\psi_1(\gamma)$, we can identify $A_2$ and $\psi_2(\gamma)$ along the same lines:
we first express $\psi_2(\gamma)$ in terms of $A_2$ using \eqref{psin}, and then identify $A_2$ using that the zero of the denominator (which we know to equal $\bar\lambda_2 -\lambda_2^\circ\ell^\circ(\gamma)+\vartheta-\gamma r_2$) is a zero of the numerator as well. Continuing this procedure, all $\psi_n(\gamma)$ (and constants $A_n$) can be found.

\subsection{Markov modulation}
In the models discussed so far the individual obligors are independent. In reality they may be affected by common external factors, to be thought of as the `state of the economy', and hence behave dependently. In this subsection we consider a model in which a particular dependence structure is incorporated, through the mechanism of Markov modulation (also known as regime-switching). 

We start by describing the model. Let $(J(t))_{t\geqslant 0}$ be an irreducible continuous-time Markov process living on $\{1,\ldots,d\}$. We denote by $q_{jk}\geqslant 0$ (for $j\not=k$) the transition rate from state $j$ to state $k$, and $q_j:=-q_{jj}=\sum_{k\not=j} q_{jk}$. Let $r_{nj}$ be the rate at which the surplus process increases when there are $n$ obligors and the background process is in state $j$, let $\lambda_{nj}$ be the corresponding hazard rate of the time to the next default, and let $\ell_j(\cdot)$ be the Laplace transform of the loss (with the associated generic random variable being denoted by $L_j$). 

Let $T_n$ be the minimum of the time of the first default and the expiration of an exponential clock of rate $\vartheta$. Denote by
\[R(T_n):=\int_0^{T_n} r_{nJ(t)}{\rm d}t\]
the increase of the surplus process till $T_n$. We start by analyzing the distribution of $R(T_n)$ through the object
\[F_{i,j,n}(x):= {\mathbb P}_i(R(T_n)\geqslant x, J(T_n)=j) := {\mathbb P}(R(T_n)\geqslant x, J(T_n)=j\,|\, J(0)=i).\]
Using the standard `Markovian reasoning', i.e., by distinguishing between all possible events in a (small) time interval of length $\Delta$ and using the memory-less property, we obtain the relation, as $\Delta\downarrow 0$,
\[F_{i,j,n}(x) = \sum_{k\not = j}F_{i,k,n}(x)\,q_{kj}\Delta + F_{i,j,n}(x-r_j\Delta)\big(1- (q_j+\lambda_{nj}+\vartheta)\big)+o(\Delta).\]
Subsequently subtracting $F_{i,j,n}(x-r_j\Delta)$ from both sides, dividing by $\Delta$ and taking the limit $\Delta\downarrow 0$, we end up with a system of linear differential equations:
\[F'_{i,j,n}(x) = \sum_{k=1}^d F_{i,k,n}(x)\,q_{kj} + F_{i,j,n}(x)\,(\lambda_{nj}+\vartheta).\]
For given $i$ and $n$, this is a system of $d$ coupled linear differential equations, that can be solved in the standard manner; the resulting structure depends on the multiplicities of the eigenvalues. In the sequel we assume that its solution is such that the corresponding density obeys
\[{\mathbb P}_i(R(T_n)\in {\rm d} x, J(T_n)=j) = \sum_{k=1}^d \xi_{i,j,k,n} e^{-\zeta_{k,n}x},\]
but a similar analysis can be done if the terms in the right-hand side of the previous display also involve polynomial factors (as a consequence of the multiplicities of some of the eigenvalues being larger than one). 

The key observation is the identity
\begin{align*}{\mathbb P}_i(Z_n\geqslant u) &= \sum_{j=1}^d \frac{\lambda_{nj}}{\lambda_{nj}+\vartheta}\int_0^\infty {\mathbb P}_j(Z_{n-1}\in {\rm d}z) {\mathbb P}_i(L_j\geqslant R(T_n)+u-z, J(T_n)=j)\\
&= \sum_{j=1}^d \frac{\lambda_{nj}}{\lambda_{nj}+\vartheta}\int_0^\infty\int_0^\infty {\mathbb P}_j(Z_{n-1}\in {\rm d}z) {\mathbb P}(L_j\geqslant x+u-z) \sum_{k=1}^d \xi_{i,j,k,n} e^{-\zeta_{k,n}x}\,{\rm d}x\\
&= \sum_{j=1}^d \frac{\lambda_{nj}}{\lambda_{nj}+\vartheta}\int_0^\infty {\mathbb P}_j(Z_{n-1}+L_j\geqslant x+u)\sum_{k=1}^d \xi_{i,j,k,n} e^{-\zeta_{k,n}x}\,{\rm d}x
\end{align*}
Therefore, using the by now familiar steps concerning a change-of-variables and swapping the order of integration,
\begin{align*}
\psi_{ni}(\gamma)&:=\int_0^\infty e^{-\gamma u} {\mathbb P}_i(Z_n\geqslant u)\,{\rm d}u\\
&=\sum_{j=1}^d \frac{\lambda_{nj}}{\lambda_{nj}+\vartheta}\int_0^\infty\int_0^\infty e^{-\gamma u} {\mathbb P}_j(Z_{n-1}+L_j\geqslant x+u)\sum_{k=1}^d \xi_{i,j,k,n} e^{-\zeta_{k,n}x}\,{\rm d}x\,{\rm d}u\\
&=\sum_{j=1}^d \frac{\lambda_{nj}}{\lambda_{nj}+\vartheta}\int_0^\infty\int_u^\infty e^{-\gamma u} {\mathbb P}_j(Z_{n-1}+L_j\geqslant v)\sum_{k=1}^d \xi_{i,j,k,n} e^{-\zeta_{k,n}(v-u)}\,{\rm d}v\,{\rm d}u\\
&=\sum_{j=1}^d \frac{\lambda_{nj}}{\lambda_{nj}+\vartheta}\int_0^\infty\sum_{k=1}^d \xi_{i,j,k,n}
\left(\int_0^v e^{-\gamma u} e^{\zeta_{k,n}u}
\,{\rm d}u\right){\mathbb P}_j(Z_{n-1}+L_j\geqslant v) e^{-\zeta_{k,n}v}\,{\rm d}v\\
&=\sum_{j=1}^d \frac{\lambda_{nj}}{\lambda_{nj}+\vartheta}\int_0^\infty\sum_{k=1}^d \xi_{i,j,k,n}
\frac{e^{-\zeta_{k,n}v}-e^{-\gamma v}}{\gamma- \zeta_{k,n}}
{\mathbb P}_j(Z_{n-1}+L_j\geqslant v)\,{\rm d}v.
\end{align*}
From now on we can follow the approach presented in Section \ref{S3}: the last expression in the previous display can be expressed in terms of $\psi_{n-1,j}(\cdot)$, for $j=1,\ldots,d.$ We thus end up with a vector-valued recursion. As the derivation is fully analogous to the one corresponding to the non-modulated case, we omit the details. 

\subsection{Brownian perturbations}

We proceed by making the model more realistic by allowing the process to evolve, between defaults, as Brownian motion rather than a deterministic drift. The parameters of this Brownian motion depend on the number of obligors that have not gone in default yet, say with drift coefficient $r_i$ and variance coefficient $\sigma_i^2$ when there are $i$ obligors left. In this section the time between the $i$-th and $(i+1)$-st default is exponentially distributed with mean $\lambda_i^{-1}$.

Considering a Brownian motion with parameters $r$ and $\sigma^2$ over an interval with exponentially distributed length with mean $\lambda^{-1}$, it is known from Wiener-Hopf theory, that 
\begin{itemize}
\item[$\circ$]
the maximum value $M^+$ achieved is exponentially distributed with the
parameter 
\[\nu^+\equiv \nu^+(r,\sigma^2,\lambda):= \frac{\sqrt{r^2+2\lambda\sigma^2}}{\sigma^2}-\frac{r}{\sigma^2}.\]
\item[$\circ$] the (absolute value of the) amount by which the process goes down after the maximum is achieved until the end of the exponentially distributed interval, say $M^-$, is exponentially distributed with the parameter
\[\nu^-\equiv \nu^-(r,\sigma^2,\lambda):= \frac{\sqrt{r^2+2\lambda\sigma^2}}{\sigma^2}+\frac{r}{\sigma^2}.\]
\item[$\circ$] the random variables $M^+$ and $M^-$ are independent. The rates $\nu^+$ and $\nu^-$ are the roots of the equation
$\lambda+r\alpha-\frac{1}{2}\alpha^2\sigma^2=0$. 
\end{itemize}
Now define $\nu_n^\pm:= \nu^\pm(-r_n,\sigma_n^2,\lambda_n+\vartheta)$; note that the first parameter is $-r_n$ rather than $r_n$, as we consider the event of the cumulative claim process exceeding the value $u$ (i.e., the reserve level dropping below $0$). 
As before, we set up a relation between $\psi_n(\cdot)$ and $\psi_{n-1}(\cdot)$. Realize that, due to the Brownian term, ruin can occur before the exponential clock (with parameter $\vartheta$) expires; this happens with probability $ e^{-\nu_n^+ u}$. 
Following the approach we have been using in the case without the Brownian term, we thus obtain the relation
\[p_n(u) =  e^{-\nu_n^+ u}  + I_n(u,\vartheta),\]
where
\begin{align*}I_n(u,\vartheta)&:=\int_0^u \int_0^\infty\nu_n^+ e^{-\nu_n^+ v}\nu_n^- e^{-\nu_n^- w}
\frac{\lambda_n}{\lambda_n+\vartheta}\,{\mathbb P}(Z_{n-1}+L\geqslant u-v+w)\,{\rm d}w\,{\rm d}v\\
&=\frac{\lambda_n}{\lambda_n+\vartheta}\int_0^u \int_{u-v}^\infty\nu_n^+ e^{-\nu_n^+ v}\nu_n^- e^{-\nu_n^- (z-u+v)}\,
{\mathbb P}(Z_{n-1}+L\geqslant z)\,{\rm d}z\,{\rm d}v
.\end{align*}

The next step is to evaluate $\psi_n(\gamma)$, by multiplying $p_n(u)$ by $e^{-\gamma u}$ and integrating over $u\in[0,\infty).$ We obtain that, interchanging the order of the integrals such that the `easy' integration (over $u$, that is) can be done first,
\begin{align*}\int_0^\infty &e^{-\gamma u} I_n(u,\vartheta){\rm d}u \\&= \frac{\lambda_n}{\lambda_n+\vartheta}
\int_0^\infty\int_0^\infty\int_v^{z+v} e^{-\gamma u} \,\nu_n^+ e^{-\nu_n^+ v}\nu_n^- e^{-\nu_n^- (z-u+v)}
\,{\mathbb P}(Z_{n-1}+L\geqslant z)\,{\rm d}u\,{\rm d}v\,{\rm d}z\\
&=
 \frac{\lambda_n}{\lambda_n+\vartheta}
\int_0^\infty\int_0^\infty {\nu_n^-}e^{-\gamma v}\frac{e^{-\nu_n^- z}-e^{-\gamma z}}{\gamma-\nu_n^-}\nu_n^+e^{-\nu_n^+ v}
\,{\mathbb P}(Z_{n-1}+L\geqslant z)\,{\rm d}v\,{\rm d}z\\
&= \frac{\lambda_n}{\lambda_n+\vartheta} \frac{\nu_n^-\nu_n^+}{(\gamma-\nu_n^-)(\gamma+\nu_n^+)} \int_0^\infty 
\big(e^{-\nu_n^- z}-e^{-\gamma z}\big) \,{\mathbb P}(Z_{n-1}+L\geqslant z)\,{\rm d}z.
\end{align*}
Performing the same steps as in the proof of Theorem \ref{TH1}, as before relying on the identities \eqref{e1} and \eqref{e2} in combination with the independence of $L$ and $Z_{n-1}$, we find
after some standard algebra the following result.

\begin{theorem} \label{TH41} For any $\gamma\geqslant 0$ and $n\in{\mathbb N}$, we have the recursion,
\begin{align*}\psi_n(\gamma)&=\frac{1}{\gamma+\nu_n^+}+\frac{\lambda_n}{\lambda_n+\vartheta}
\frac{1}{\gamma+\nu_n^+}{\frac{\nu_n^+}{\gamma}}\\
&\hspace{20mm}-\frac{\lambda_n}{\lambda_n+\vartheta} \frac{\nu_n^-\nu_n^+}{(\gamma-\nu_n^-)(\gamma+\nu_n^+)}\big( B(\nu_n^-,\psi_{n-1}(\nu_n^-))-B(\gamma,\psi_{n-1}(\gamma))\big),
\end{align*}
where $\psi_0(\gamma)\equiv 0.$
\end{theorem}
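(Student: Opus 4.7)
The starting point is the pre-theorem identity $p_n(u)=e^{-\nu_n^+ u}+I_n(u,\vartheta)$, which decomposes ruin into the Wiener-Hopf event that the Brownian excursion alone drives the reserve below zero before any default or clock expiration (this contributes the factor $e^{-\nu_n^+ u}$, since $M^+$ is ${\rm Exp}(\nu_n^+)$), and the complementary event in which one of the $n$ obligors defaults first, yielding the convolution $I_n(u,\vartheta)$ which has already been brought in the excerpt to the form
\[\int_0^\infty e^{-\gamma u} I_n(u,\vartheta)\,{\rm d}u=\frac{\lambda_n}{\lambda_n+\vartheta}\,\frac{\nu_n^-\nu_n^+}{(\gamma-\nu_n^-)(\gamma+\nu_n^+)}\int_0^\infty \big(e^{-\nu_n^- z}-e^{-\gamma z}\big)\,{\mathbb P}(Z_{n-1}+L\geqslant z)\,{\rm d}z.\]
The plan is therefore to write $\psi_n(\gamma)$ as the sum of the elementary Laplace transform of $e^{-\nu_n^+ u}$, which immediately produces the leading term $1/(\gamma+\nu_n^+)$, and the right-hand side above, reducing the remaining CCDF integral to quantities involving $\psi_{n-1}$.

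To handle that integral I would invoke the two identities \eqref{e1} and \eqref{e2} in exactly the manner used in the proof of Theorem \ref{TH1}. Applying \eqref{e1} rewrites $\int_0^\infty e^{-\gamma z}\,{\mathbb P}(Z_{n-1}+L\geqslant z)\,{\rm d}z$ as $(1-{\mathbb E}\,e^{-\gamma(Z_{n-1}+L)})/\gamma$; combining independence of $Z_{n-1}$ and $L$ with \eqref{e2} gives ${\mathbb E}\,e^{-\gamma(Z_{n-1}+L)}=(1-\gamma\psi_{n-1}(\gamma))\,\ell(\gamma)$, so that after cancellation the integral collapses to $1/\gamma-B(\gamma,\psi_{n-1}(\gamma))$, with the analogous expression at $\nu_n^-$. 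Subtracting, the inner integrand $e^{-\nu_n^- z}-e^{-\gamma z}$ contributes $(1/\nu_n^- - 1/\gamma)-\big(B(\nu_n^-,\psi_{n-1}(\nu_n^-))-B(\gamma,\psi_{n-1}(\gamma))\big)$.

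Multiplying by the prefactor $(\lambda_n/(\lambda_n+\vartheta))\cdot\nu_n^-\nu_n^+/((\gamma-\nu_n^-)(\gamma+\nu_n^+))$ then splits the right-hand side into two pieces. The $B$-piece reproduces the third summand of the claimed recursion verbatim. The purely rational piece simplifies because $1/\nu_n^- -1/\gamma=(\gamma-\nu_n^-)/(\gamma\nu_n^-)$, so the awkward factor $(\gamma-\nu_n^-)$ and the $\nu_n^-$ both cancel, leaving exactly $(\lambda_n/(\lambda_n+\vartheta))\cdot(1/(\gamma+\nu_n^+))\cdot(\nu_n^+/\gamma)$, which is the second summand. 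The base case $\psi_0(\gamma)\equiv 0$ is immediate, since with no obligors left the reserve process neither drifts, diffuses, nor jumps, so $p_0(u)\equiv 0$. I do not foresee a genuine obstacle: the only mildly delicate point is to check that the apparent singularities at $\gamma=\nu_n^-$ and $\gamma=0$ are removable, which is automatic because both numerators vanish at the respective points, but this is the same harmless phenomenon already encountered in the proof of Theorem \ref{TH1}.
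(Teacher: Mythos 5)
Your proposal is correct and takes essentially the same route as the paper: starting from the decomposition $p_n(u)=e^{-\nu_n^+u}+I_n(u,\vartheta)$ and the transformed expression for $I_n$, the paper leaves precisely the step you carry out---applying \eqref{e1} and \eqref{e2} together with the independence of $Z_{n-1}$ and $L$, and simplifying the rational term via $1/\nu_n^- - 1/\gamma=(\gamma-\nu_n^-)/(\gamma\nu_n^-)$---as ``the same steps as in the proof of Theorem \ref{TH1}''. Your algebra matches the stated recursion, so nothing further is needed.
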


\begin{remark}{\em
In Theorem \ref{TH41} we can simplify
\[\frac{\lambda_n}{\lambda_n+\vartheta} \frac{\nu_n^-\nu_n^+}{(\gamma-\nu_n^-)(\gamma+\nu_n^+)}=\frac{\lambda_n}{\lambda_n+\vartheta+r_n\gamma-\frac{1}{2}\gamma^2\sigma_n^2},\]
using that $\nu_n^+$ and $\nu_n^-$ solve
$(\lambda_n+\vartheta)+r_n\alpha-\frac{1}{2}\alpha^2\sigma_n^2=0$. $\hfill\Diamond$
}\end{remark}

\subsection{Multiple groups}\label{MG}

To make the model more realistic, one could work with multiple (heterogeneous) groups of obligors. Suppose there are $G\in\mathbb{N}$ groups of obligors with initially $n_j$ obligors in group $j\in\{1,\ldots,G\}$; write ${\boldsymbol n}=(n_1,\ldots, n_G).$ We consider the multi-group counterpart of the base model of Section \ref{S3}:
each obligor in group $j$ has a time-to-default that is exponentially distributed with rate $\lambda_j$. The losses at default per obligor in group $j$ are i.i.d.\ random variables with Laplace transform $\ell_j(\cdot)$; in addition these per-group sequences are assumed independent. The income per unit time for this group is $r_j i$ when there are $i\in\{1,\ldots, n_j\}$ obligors that have not gone into default yet. 

The company's capital reserve is given by the sum of the reserves of the individual groups; its initial level is $u>0$. Let $\psi_{{\boldsymbol n}}(\gamma)$ denote the double transform of the probability of ruin over an exponentially distributed interval (with, as usual, mean $\vartheta^{-1}$), given there $n^j$ obligors in group $j$ that have not gone into default yet. Then by the same argumentation as before we find, for ${\boldsymbol n}$ component-wise at least equal to 1, and with ${\boldsymbol e}_j$ the $j$-th unit vector,

\begin{align*}
\psi_{\boldsymbol n}(\gamma)=&\sum_{j=1}^G\frac{\lambda_j n_j}{\sum_{k=1}^G\lambda_k n_k+\vartheta}\frac{1}{\gamma}+\sum_{j=1}^G\frac{\lambda_j n_j}{\sum_{k=1}^G\lambda_k n_k +\vartheta-\gamma r_j n_j}\\
&\times\Bigg(B_j\left(\frac{\lambda_j +\vartheta/n_j}{r_j},\psi_{{\boldsymbol n}-{\boldsymbol e}_j}\left(\frac{\lambda_j +\vartheta/n_j}{r_j}\right)\right)-B_j\left(\gamma,\psi_{{\boldsymbol n}-{\boldsymbol e}_j}(\gamma\right)\Bigg),
\end{align*}
\normalsize
where \[B_j(x,y):= \ell_j(x)\left(\frac{1}{x}-y\right).\] We have thus expressed $\psi_{\boldsymbol n}(\gamma)$ as a linear function of $\psi_{{\boldsymbol n}-{\boldsymbol e}_1}(\gamma)$ up to $\psi_{{\boldsymbol n}-{\boldsymbol e}_G}(\gamma)$.
A similar recursive relation be found if some of the entries of ${\boldsymbol n}$ equal 0. Given that $\psi_{{\boldsymbol 0}}(\gamma)=0$, with ${\boldsymbol 0}$ denoting the $G$-dimensional all-zeroes vector, we have thus devised a procedure to identify $\psi_{\boldsymbol n}(\gamma)$.

\begin{remark}{\em 
The above model extension with multiple classes offers an important additional flexibility. In the first place, one could cluster the obligors in terms of the loss distributions. Per class this loss can even be deterministic; this is a useful property, as in the credit context the losses of some obligors may be a priori known. In addition, we could work with some classes in which the obligors do not go bankrupt and some classes in which they do. Also, one could work with a class-specific income rate.  $\hfill\Diamond$
}\end{remark}

\section{Numerical experiments}\label{num}
In this section we focus on issues concerning the numerical evaluation of the ruin probability. 
In the first subsection, we specialize to the case that the losses are exponentially distributed, where some of the quantities that feature in the numerical analysis allow closed-form analysis. In the second subsection, we present a couple of illustrative examples. These in particular quantify the effect of the size of the obligor population.

\subsection{Exponentially distributed losses}\label{subsec_exp}
In Section \ref{subsec_analysis} the focus was on finding an expression for the double transform $\psi_n(\gamma)$,
which can then be inverted numerically. In Section~\ref{asy} we presented a couple of other approaches: asymptotics, an efficient importance sampling algorithm, and bounds. In this section we present an alternative technique, namely an iterative procedure that directly provides the ruin probabilities $p_n(u,t)$ themselves.  We consider the model variant in which the default rate and the income rate are $\lambda_i$ and $r_i$, respectively, during time periods in which there are $i$ obligors left. 

As in Section \ref{subsec_analysis}, the idea is to condition on the first default.
We thus obtain, with $W(\cdot)$ as introduced in Section \ref{asy}, the following recursive relation:
\begin{align}\nonumber
p_n(u,t)&=\int_0^t  \lambda_n e^{- \lambda_n s} {\mathbb P}\left(\sup_{0\leqslant v\leqslant t-s} \sum_{i=1}^{n-1}W_i(v)+L\geqslant u+r_ns\right)\,{\rm d}s\\ \nonumber
&=\int_0^t \lambda_n e^{- \lambda_ns} \,{\rm d}s-\int_0^t  \lambda_n e^{- \lambda_n s} {\mathbb P}\left(\sup_{0\leqslant v\leqslant t-s} \sum_{i=1}^{n-1}W_i(v)+L\leqslant u+r_ns\right)\,{\rm d}s\\
&=1-e^{-\lambda_n t}-\int_0^t\int_0^{u+r_n s}  \lambda_n e^{- \lambda_n s}\left(1-p_{n-1}(u+r_n s-x,t-s)\right)\mathbb{P}(L\in {\rm d}x)\,{\rm d}s. \label{ECURS}
\end{align}

When there is only one obligor left, there is only one scenario leading to ruin: default should take place before the exponential clock (with mean $\vartheta^{-1}$)  expires and the loss should be sufficiently large. In other words, 
\begin{align}
\nonumber p_1(u,t)&= \int_0^t\int^\infty_{u+r_1s}  \lambda_1 e^{- \lambda_1 s} \mathbb{P}(L\in {\rm d}x)\,{\rm d}s= \int_0^t \lambda_1 e^{- \lambda_1 s} \mathbb{P}(L\geqslant u+r_1s)\,{\rm d}s
\end{align}
From this point on we focus on the case of exponentially distributed claims with mean $\mu^{-1}$, i.e., ${\mathbb P}(L\geqslant x)=e^{-\mu x}$. We readily obtain
\[p_1(u,t)=\int_0^t \lambda_1 e^{- \lambda_1 s} e^{-\mu (u+r_1s)}\,{\rm d}s=\frac{\lambda_1 e^{-\mu u}}{\lambda_1+\mu r_1}\left(1-e^{-(\lambda_1+\mu r_1) t}\right).\]
We can thus obtain $p_2(u,t)$ applying numerical integration to \eqref{ECURS} with $n=2$. Continuing along these lines,  $p_n(u,t)$ can be numerically evaluated for higher values of $n$. 

\vb

We now point out how to evaluate the large-deviations asymptotics that were presented in Section~\ref{subsec_LD}, in the case of exponentially distributed claims.
The moment generating function of $W_1(s)$ is for $\alpha<\mu$ given by
\[\omega_s(\alpha)=\left(1-e^{-(\lambda+ r\alpha)s}\right) \frac{\lambda}{\lambda+r\alpha}\frac{\mu}{\mu-\alpha} + e^{-(\lambda+r\alpha)s},\]
whereas for $\alpha\geqslant \mu$ the moment generating function is infinite. We continue by computing the mean net loss corresponding to a single obligor (as a function of time):
\begin{align*}m(s)&:={\mathbb E}W_1(s) = \frac{1}{\mu}(1-e^{-\lambda s}) - r\int_0^s u\, \lambda e^{-\lambda v}{\rm d}v
- rs\int_s^\infty  \lambda e^{-\lambda v}{\rm d}v\\
&=\left(\frac{1}{\mu}-\frac{r}{\lambda}\right)(1-e^{-\lambda s}).
\end{align*}
{In the sequel we will assume $u>m(\infty)$, or equivalently $\lambda-r\mu<\lambda\mu u$, to make sure the event under consideration is rare.}

The Legendre transform pertaining to $W_1(s)$ reads
\[I(s):=\sup_{0<\alpha<\mu}\left(\alpha u - \log \omega_s(\alpha)\right);\]
we can rule out $\alpha\geqslant \mu$ as  $\omega_s(\alpha)=\infty$ for these $\alpha$.
Because the first-order condition does not allow an explicit solution, one cannot write $I(s)$ in closed form. 
Two boundary cases can be dealt with explicitly, though. It is first observed that, denoting by $\omega'_{s,1}(\alpha)$ the derivative of $\omega_s(\alpha)$ with respect to $\alpha$, and by $\omega'_{s,2}(\alpha)$ the derivative of $\omega_s(\alpha)$ with respect to $s$,
\begin{align}\nonumber I'(s)& = \frac{\rm d}{{\rm d}s} \left(\alpha^\star(s) u - \log \omega_s(\alpha^\star(s))\right)\\&=\frac{{\rm d}\alpha^\star(s)}{{\rm d}s}\left(u- \frac{\omega'_{s,1}(\alpha^\star(s))}{\omega_s(\alpha^\star(s))}\right)- \frac{\omega'_{s,2}(\alpha^\star(s))}{\omega_s(\alpha^\star(s))}=- \frac{\omega'_{s,2}(\alpha^\star(s))}{\omega_s(\alpha^\star(s))},\label{IS}\end{align}
where the last equality is due to the definition of $\alpha^\star(s)$. By an elementary computation,
\begin{equation}\label{afge}\omega'_{s,2}(\alpha)= \left(\frac{\lambda\mu}{\mu-\alpha}-(\lambda+r\alpha)\right)e^{-(\lambda +r\alpha)s}=
\frac{r\alpha^2+\lambda\alpha -r\mu\alpha}{\mu-\alpha}\,e^{-(\lambda +r\alpha)s}.\end{equation}
We observe that the Legendre transform $I(s)$ is decreasing in $s$ whenever $\alpha^*(s)>\mu-{\lambda}/{r}$.
\begin{itemize}
\item[$\circ$] For $s=0$, we immediately see that $\omega_0(\alpha) = 1$ for all $\alpha$, so that $\alpha^\star(0)=\mu$ and $I(0) = \mu u.$ In addition, we obtain by some straightforward algebra that
\[I'(0) = - \lim_{\alpha\uparrow \mu} \frac{\omega'_{0,2}(\alpha)}{\omega_0(\alpha)}=-\infty.\]
\item[$\circ$] For $s=\infty$,
\[I(s) = \sup_{0<\alpha<\mu} \kappa(\alpha),\:\:\:\:\kappa(\alpha):=\alpha u - \log (\lambda\mu) +\log(\lambda+r\alpha)+\log(\mu-\alpha).\]
Observe that $\kappa(\cdot)$ is concave, with $\kappa'(0)>0$ (under the assumption $u>m(\infty)$) and $\kappa(\alpha)\to-\infty$ as $\alpha\uparrow\mu.$ In other words, $\kappa(\cdot)$ attains a maximum in $(0,\mu).$
The first order condition, determining $\alpha^\star(\infty)$, is
\[u=\frac{1}{\mu-\alpha}-\frac{r}{\lambda+r\alpha},\]
or equivalently
\[ru\alpha^2 +\big((\lambda-r\mu )u+2r\big)\alpha - \lambda\mu\big(u-m(\infty)\big)=0.\]
As $\lambda\mu (u-m(\infty))>0$, this equation has a positive and negative root. 
Consequently, $\alpha^\star(\infty)$ is the positive root, i.e.,
\[\alpha^\star(\infty) = \frac{-2r-\lambda u+r\mu  u +\sqrt{4r^2+\lambda^2 u^2+2r\lambda\mu  u^2 + r^2\mu^2  u^2}}{2ru},\]
so that $I(\infty) = \kappa(\alpha^\star(\infty)).$
Next, we want to find the sign  of $I(s)$ in the regime that $s\to\infty$. Based on \eqref{IS} and \eqref{afge}, this is the sign of 
$-r\alpha^\star(\infty)-\lambda+r\mu$. Using the explicit solution of $\alpha^\star(\infty)$, it requires some straightforward calculus to verify that this leads to a negative sign, i.e. $I(s)$ is decreasing in the regime that $s\to\infty$, if and only if $ \lambda- r\mu>-\lambda\mu u$. 
\end{itemize}

\subsection{Numerical example}
For the numerical results we have used a setup that aligns with the one considered in \cite{Asmussen1984}. 
\begin{enumerate}
\item[$\circ$] We consider the case that both the income rates $r_i$ and the default intensity $\lambda_i$ are linear in the number of obligors $i$ that have not gone into default yet. We let the proportionality constants be $r=  1$ and $\lambda=0.9$, respectively. In other words, when there are $i$ obligors in the system that have not gone into default yet, the income rate is given by $i$ and the default intensity rate by $0.9\,i$.
\item[$\circ$] The losses are exponentially distributed with parameter $\mu=1$.
\end{enumerate}
With these parameter settings the rarity condition $m(\infty)<u$ is satisfied for all $u>0$, as we have that $0.9-1=-0.1<0<0.9\,u$.

First, we focus on the evaluation of the large-deviation asymptotics. For $s\rightarrow \infty$ we have that the Legendre transform $I(s)$ is decreasing (increasing) if $u>\frac{1}{9}$ (if $u<\frac{1}{9}$, respectively). For illustrational purposes we have plotted the functions $\alpha^\star(s)$ and $I(s)$ in Figure \ref{fig1}, as a function of time $s$, for $u=5$ as well as $u=0.1$.
 In the first instance, with $u=5$, the function $I(\cdot)$ is decreasing, so that the optimal $t^\star=\infty$, whereas for $u=0.1$ we see that $I(\cdot)$ attains a minimal value at $t^\star=2.3$. 

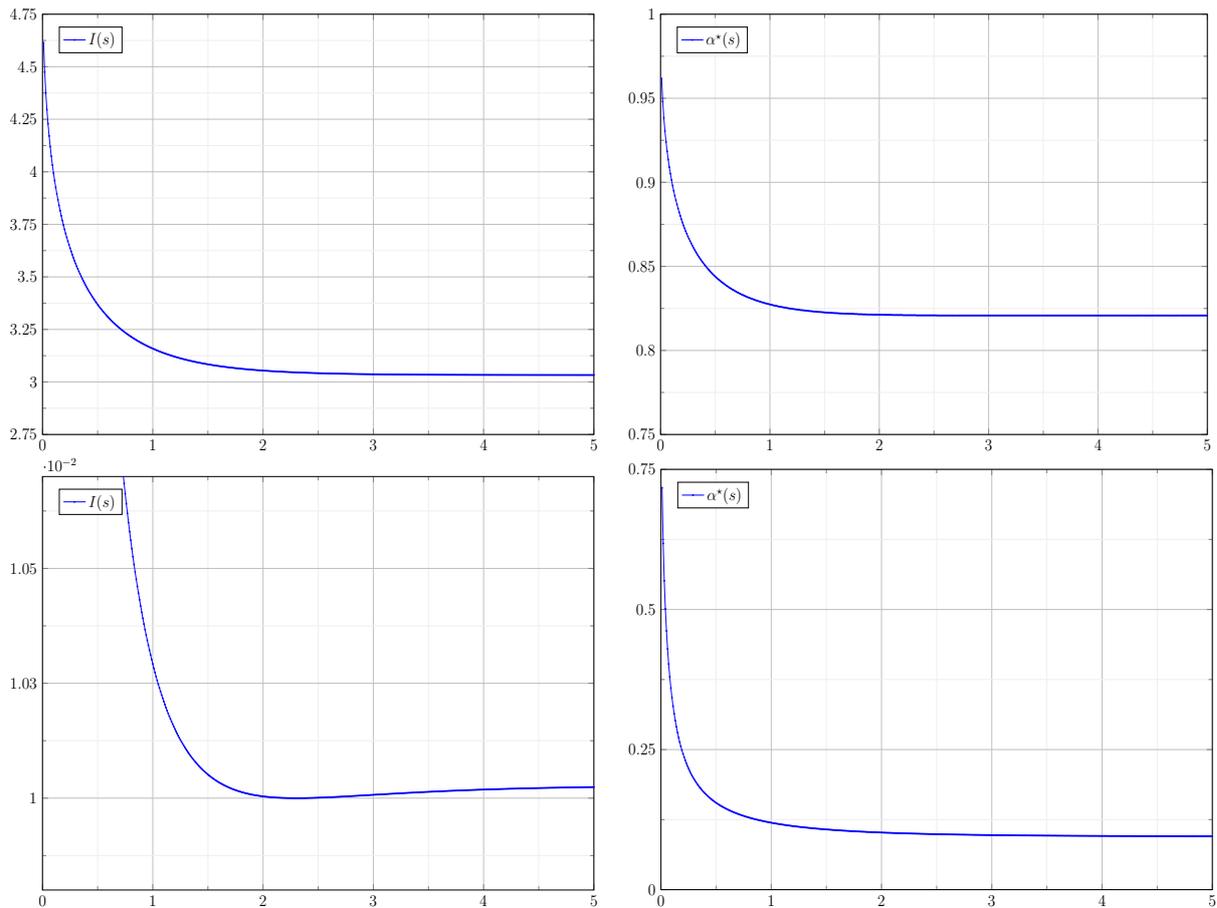
\begin{figure}
  \centering
\resizebox{8cm}{6cm}{
 \pgfplotstableread{Data.txt}{\table}
    \begin{tikzpicture}
        \begin{axis}[
            xmin = 0, xmax = 5,
            ymin = 2.75, ymax = 4.75,
            xtick distance = 1,
            ytick distance = 0.25,
            grid = both,
            minor tick num = 1,
            major grid style = {lightgray},
            minor grid style = {lightgray!25},
            width = \textwidth,
            height = 0.75\textwidth,
            legend cell align = {left},
            legend pos = north west
        ]
            \addplot[blue, mark = *, mark size = 0.3pt] table [x = {x}, y = {b}] {\table};
                       \legend{$I(s)$}
        \end{axis}
    \end{tikzpicture}}
  \resizebox{8cm}{6cm}{
 \pgfplotstableread{Data.txt}{\table}
    \begin{tikzpicture}
        \begin{axis}[
            xmin = 0, xmax = 5,
            ymin = 0.75, ymax = 1,
            xtick distance = 1,
            ytick distance = 0.05,
            grid = both,
            minor tick num = 1,
            major grid style = {lightgray},
            minor grid style = {lightgray!25},
            width = \textwidth,
            height = 0.75\textwidth,
            legend cell align = {left},
            legend pos = north west
        ]
            \addplot[blue, mark = *,mark size = 0.3pt] table [x = {x}, y = {a}] {\table};
         
                     \legend{
                $\alpha^\star(s)$}
        \end{axis}
    \end{tikzpicture}
}


\resizebox{8cm}{6cm}{
 \pgfplotstableread{Data.txt}{\table}
    \begin{tikzpicture}
        \begin{axis}[
            xmin = 0, xmax = 5,
            ymin = 0.0098, ymax = 0.0107,
            xtick distance = 1,
            ytick distance = 0.00025,
            grid = both,
            minor tick num = 1,
            major grid style = {lightgray},
            minor grid style = {lightgray!25},
            width = \textwidth,
            height = 0.75\textwidth,
            legend cell align = {left},
            legend pos = north west
        ]
            \addplot[blue, mark = *, mark size = 0.3pt] table [x = {x}, y = {d}] {\table};
          
            \legend{$I(s)$}
        \end{axis}
    \end{tikzpicture}}
    \resizebox{8cm}{6cm}{
 \pgfplotstableread{Data.txt}{\table}
    \begin{tikzpicture}
        \begin{axis}[
            xmin = 0, xmax = 5,
            ymin = 0, ymax = 0.75,
            xtick distance = 1,
            ytick distance = 0.25,
            grid = both,
            minor tick num = 1,
            major grid style = {lightgray},
            minor grid style = {lightgray!25},
            width = \textwidth,
            height = 0.75\textwidth,
            legend cell align = {left},
            legend pos = north west
        ]
            \addplot[blue, mark = *, mark size = 0.3pt] table [x = {x}, y = {c}] {\table};
          
            \legend{
                $\alpha^\star(s)$}
        \end{axis}
    \end{tikzpicture}}

  \caption{The Legendre transform $I(s)$ and the underlying optimal $\alpha^\star(s)$ parameter as a function of time $s$ (for $s\in[0,5]$). In the top panels we took  for $u=5$, whereas in the bottom panels we took $u=0.1$.}\label{fig1}
\end{figure}

\vb

In Figure \ref{fig_Exact_n10} we present, for different values of the initial number of obligors $n$ and $u=5$,  the ruin probabilities as a function of time. This has been done relying on the iterative approach presented of Section \ref{subsec_exp}. The double integral involved has been evaluated analytically for $n=1,2$ while numerical integration methods have been employed for $n>2$. We do observe that the ruin probability increases in the length of the time interval, as desired. 
The upper bound (as derived in Section \ref{Unif}) in this instance is given by 0.6065, and is independent of the number of obligors $n$. As can be observed, this upper bound is rather conservative, in particular  when there are only a few obligors in the system.

\begin{figure}\resizebox{12.8cm}{9cm}{
 \pgfplotstableread{Data2.txt}{\table}\pgfplotsset{scaled y ticks=false}
    \begin{tikzpicture}
        \begin{axis}[
            xmin = 0, xmax = 10,
            ymin = 0, ymax = 0.15,
            xtick distance = 1,
            ytick distance = 0.01,
            grid = both,
            minor tick num = 1,
            major grid style = {lightgray},
            minor grid style = {lightgray!25},
            width = \textwidth,
            height = 0.75\textwidth,
            legend cell align = {left},
            yticklabel style={
        /pgf/number format/fixed,
        /pgf/number format/precision=5
},
scaled y ticks=false,
            legend pos = north west]
                       \addplot[blue, mark = *, mark size = 0.3pt] table [x = {x}, y = {a}] {\table};
             \addplot[blue, mark = *, mark size = 0.3pt] table [x = {x}, y = {b}] {\table};
              \addplot[blue, mark = *, mark size = 0.3pt] table [x = {x}, y = {c}] {\table};
               \addplot[blue, mark = *, mark size = 0.3pt] table [x = {x}, y = {d}] {\table};
                \addplot[blue, mark = *, mark size = 0.3pt] table [x = {x}, y = {e}] {\table};
                 \addplot[blue, mark = *, mark size = 0.3pt] table [x = {x}, y = {f}] {\table};
                  \addplot[blue, mark = *, mark size = 0.3pt] table [x = {x}, y = {g}] {\table};
                   \addplot[blue, mark = *, mark size = 0.3pt] table [x = {x}, y = {h}] {\table};
                    \addplot[blue, mark = *, mark size = 0.3pt] table [x = {x}, y = {i}] {\table};
                     \addplot[blue, mark = *, mark size = 0.3pt] table [x = {x}, y = {j}] {\table};

        \end{axis}
         \end{tikzpicture}}
\caption{Ruin probabilities over time: $p_n(u,t)$ as a function of $t$, for $n=1$ (bottom line) to $n=10$ (top line), with $u=5$.}
\label{fig_Exact_n10}
\end{figure}
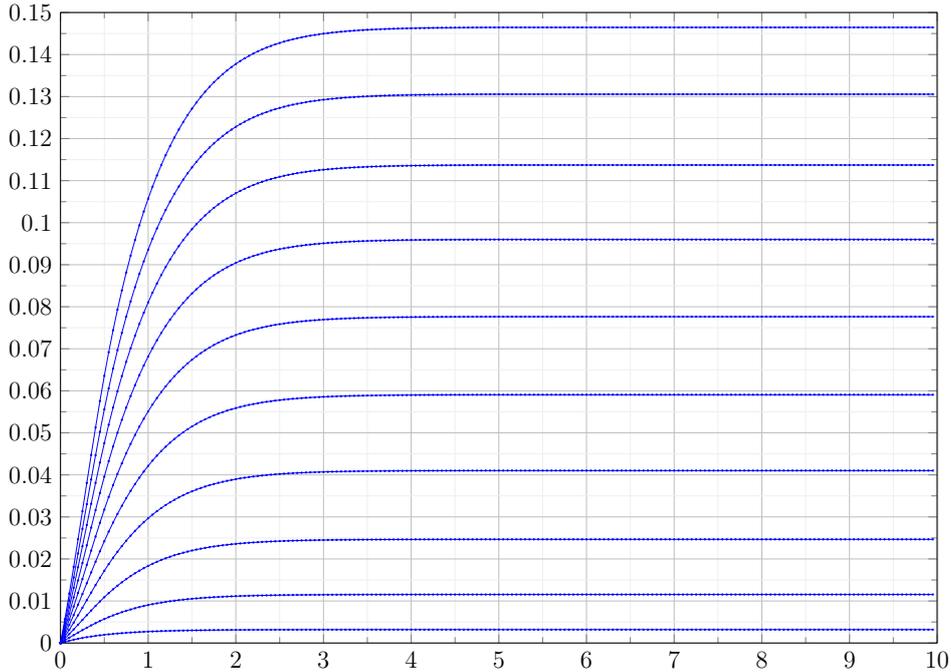

In a next experiment we study the performance of the importance sampling technique that was presented Section \ref{subsec_is}. The top panel of Figure \ref{fig3}  shows, for the initial capital reserve $u$ being equal to $5$, the estimates of the ruin probability as a function of time, obtained by 
simulation, using our importance sampling algorithm. The values nearly coincide with what is obtained 
by applying the na\"{i}ve, direct simulation approach (i.e., without a change of measure); 
from Figure \ref{fig_Exact_n10} we in addition observe that there is a highly accurate match with the values computed using the iterative approach of Section \ref{subsec_exp}. 
Regarding the  importance sampling simulations  it is noted that we let the events ${\mathscr E}_j$ correspond to the event where the net cumulative loss process exceeds the initial level $u$ (instead of $nu$), as $u$ in this example corresponds to the {\it unscaled} initial capital level. The fact that we have used as many as $10^6$ runs guarantees estimates with a high precision. The importance sampling based approach substantially outperforms direct simulation, in that it greatly reduces the variance of the estimator, as can be observed in the bottom panels of Figure \ref{fig3}.

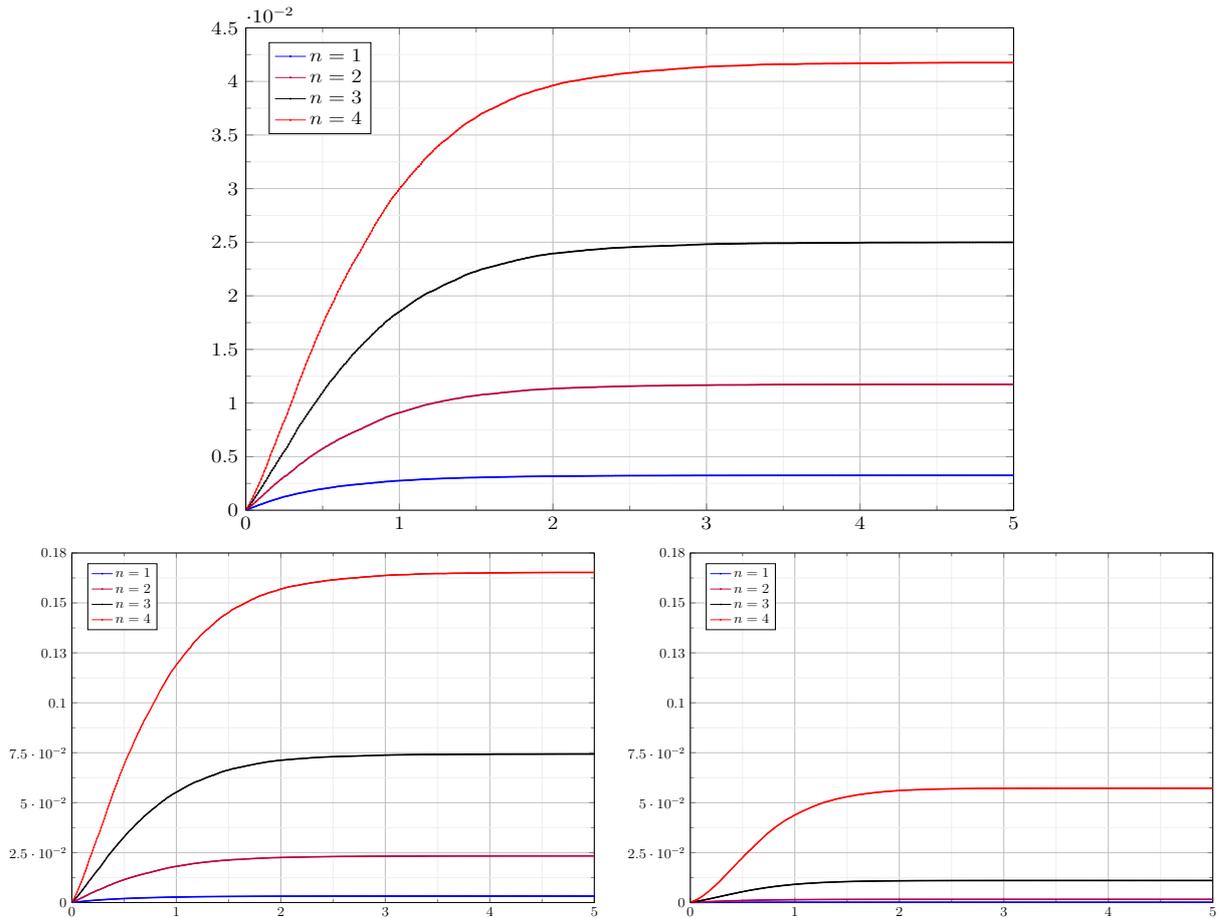
\begin{figure}
  \centering
\begin{center}
\resizebox{11cm}{7cm}{
     \pgfplotstableread{Data3.txt}{\table}
    \begin{tikzpicture}    
        \begin{axis}[
            xmin = 0, xmax = 5,
            ymin = 0, ymax = 0.045,
            xtick distance = 1,
            ytick distance = 0.005,
            grid = both,
            minor tick num = 1,
            major grid style = {lightgray},
            minor grid style = {lightgray!25},
            width = \textwidth,
            height = 0.75\textwidth,
            legend cell align = {left},
            legend pos = north west
        ]
            \addplot[blue, mark = *, mark size = 0.3pt] table [x = {x}, y = {regular simulation n=1}] {\table};
             \addplot[purple, mark = *, mark size = 0.3pt] table [x = {x}, y = {regular simulation n=2}] {\table};
              \addplot[black, mark = *, mark size = 0.3pt] table [x = {x}, y = {regular simulation n=3}] {\table};
               \addplot[red, mark = *, mark size = 0.3pt] table [x = {x}, y = {regular simulation n=4}] {\table};
                      
                       \legend{
                $n=1$,
                $n=2$,
                $n=3$,
                $n=4$}   
           
        \end{axis}
         \end{tikzpicture}}\end{center}

\resizebox{8cm}{4.99cm}{
     \pgfplotstableread{Data3.txt}{\table}
    \begin{tikzpicture}
        \begin{axis}[
            xmin = 0, xmax = 5,
            ymin = 0, ymax = 0.175,
            xtick distance = 1,
            ytick distance = 0.025,
            grid = both,
            minor tick num = 1,
            major grid style = {lightgray},
            minor grid style = {lightgray!25},
            width = \textwidth,
            height = 0.75\textwidth,
            legend cell align = {left},
            legend pos = north west
        ]
            \addplot[blue, mark = *, mark size = 0.3pt] table [x = {x}, y = {regular variance n=1}] {\table};
             \addplot[purple, mark = *, mark size = 0.3pt] table [x = {x}, y = {regular variance n=2}] {\table};
              \addplot[black, mark = *, mark size = 0.3pt] table [x = {x}, y = {regular variance n=3}] {\table};
               \addplot[red, mark = *, mark size = 0.3pt] table [x = {x}, y = {regular variance n=4}] {\table};
                      
                       \legend{
                $n=1$,
                $n=2$,
                $n=3$,
                $n=4$}   
           
        \end{axis}
         \end{tikzpicture}}
         \resizebox{8cm}{4.99cm}{
     \pgfplotstableread{Data3.txt}{\table}
    \begin{tikzpicture}
        \begin{axis}[
            xmin = 0, xmax = 5,
            ymin = 0, ymax = 0.175,
            xtick distance = 1,
            ytick distance = 0.025,
            grid = both,
            minor tick num = 1,
            major grid style = {lightgray},
            minor grid style = {lightgray!25},
            width = \textwidth,
            height = 0.75\textwidth,
            legend cell align = {left},
            legend pos = north west
        ]
                \addplot[blue, mark = *, mark size = 0.3pt] table [x = {x}, y = {is variance n=1}] {\table};
             \addplot[purple, mark = *, mark size = 0.3pt] table [x = {x}, y = {is variance n=2}] {\table};
              \addplot[black, mark = *, mark size = 0.3pt] table [x = {x}, y = {is variance n=3}] {\table};
               \addplot[red, mark = *, mark size = 0.3pt] table [x = {x}, y = {is variance n=4}] {\table};
                      
                       \legend{
                $n=1$,
                $n=2$,
                $n=3$,
                $n=4$}   
           
        \end{axis}
         \end{tikzpicture}}

\caption{Top panel: ruin probabilities, as simulated by importance sampling: $p_n(u,t)$ as a function of time $t$. Bottom left panel: variance of the estimator under direct simulation as a function of $t$. Bottom right panel: variance of the estimator under importance sampling as a function of $t$. In all experiments we took $u=5.$}\label{fig3}
\end{figure}

\section{Concluding remarks}
Motivated by applications in credit risk,  we have analyzed in this paper a transient counterpart of the classical Cram\'er-Lundberg model. We have presented a broad range of results: exact analysis in terms of transforms, asymptotic analysis including an efficient rare-event simulation algorithm, and four model variants (viz.\ a setup that also includes non-default losses, one with Markov modulation to make the obligors dependent, one in which the linear drifts are replaced by Brownian motions, and a last one in which there are multiple groups of obligors). 

Follow-up research could relate to the next steps to make this model operational. A main challenge concerns dealing with the heterogeneity between the obligors. When there are relatively few groups (with homogeneity within these groups) the approach of Section \ref{MG} can be relied upon, but when effectively all obligors have a specific time-to-default and loss distribution, an alternative approach needs to be developed. Another topic for future research could concern procedures to on-the-fly adjust the capital level given realizations of the defaults; cf.\ e.g.\ the approach proposed in\cite{DMSW}. 


\bibliographystyle{plain}
{\small }
\end{document}